\definecolor{mydarkblue}{rgb}{0,0.08,0.45}
\newcommand{\rd}{\textrm{d}}
\newcommand{\eps}{\epsilon}
\newcommand{\vz}{\mathbf{z}}
\newcommand{\vg}{\mathbf{g}}
\newcommand{\vk}{\mathbf{k}}
\newcommand{\va}{\mathbf{a}}
\newcommand{\mC}{\mathbf{C}}
\newcommand{\mD}{\mathbf{D}}
\newcommand{\mB}{\mathbf{B}}
\newcommand{\mV}{\mathbf{V}}
\newcommand{\mM}{\mathbf{M}}
\newcommand{\mW}{\mathbf{W}(t)}
\newcommand{\psU}{\widetilde{U}}
\newcommand{\mQ}{\mathbf{Q}}
\newcommand{\mG}{\mathbf{G}}
\newcommand{\mI}{\mathbf{I}}
\newcommand{\vf}{\mathbf{f}}
\newcommand{\mF}{\mathbf{F}}
\newcommand{\rvx}{\mathbf{x}}
\newcommand{\vzero}{\mathbf{0}}
\newcommand{\presec}{\vspace*{-4pt}}
\newcommand{\postsec}{\vspace*{-4pt}} %TODO was 5
\newcommand{\pressec}{\vspace*{-6pt}}
\newcommand{\postssec}{\vspace*{-4pt}}
\newcommand{\prepar}{\vspace*{-2pt}}
\newcommand{\precap}{\vspace*{-0.1in}}
\newcommand{\postcap}{\vspace*{-0.25in}}
\newtheorem{theorem}{Theorem}
\newtheorem{remark}{Remark}
\title{A Complete Recipe for Stochastic Gradient MCMC}
\author[]{\textbf{Yi-An Ma}}
\author[]{\textbf{Tianqi Chen}}
\author[]{\textbf{Emily B. Fox}}
\affil[]{\vspace{-0.1in}University of Washington \texttt{\small \{yianma@u,tqchen@cs,ebfox@stat\}.washington.edu}}
\begin{document}

\maketitle

\begin{abstract}
Many recent Markov chain Monte Carlo (MCMC) samplers leverage continuous dynamics to define a transition kernel that efficiently explores a target distribution. In tandem, a focus has been on devising scalable variants that subsample the data and use stochastic gradients in place of full-data gradients in the dynamic simulations. However, such stochastic gradient MCMC samplers have lagged behind their full-data counterparts in terms of the complexity of dynamics considered since proving convergence in the presence of the stochastic gradient noise is non-trivial.  Even with simple dynamics, significant physical intuition is often required to modify the dynamical system to account for the stochastic gradient noise.  In this paper, we provide a general recipe for constructing MCMC samplers---including stochastic gradient versions---based on continuous Markov processes specified via two matrices.  We constructively prove that the framework is \emph{complete}. That is, any continuous Markov process that provides samples from the target distribution can be written in our framework.  We show how previous continuous-dynamic samplers can be trivially ``reinvented'' in our framework, avoiding the complicated sampler-specific proofs. We likewise use our recipe to straightforwardly propose a new state-adaptive sampler: \emph{stochastic gradient Riemann Hamiltonian Monte Carlo} (SGRHMC).  Our experiments on simulated data and a streaming Wikipedia analysis demonstrate that the proposed SGRHMC sampler inherits the benefits of Riemann HMC, with the scalability of stochastic gradient methods.
\end{abstract}

\presec
\section{Introduction}
\postsec
Markov chain Monte Carlo (MCMC) has become a defacto tool for Bayesian posterior inference.  However, these methods notoriously mix slowly in complex, high-dimensional models and scale poorly to large datasets.  The past decades have seen a rise in MCMC methods that provide more efficient exploration of the posterior, such as Hamiltonian Monte Carlo (HMC)~\cite{Duane:1987HMC,NealHMC} and its Reimann manifold variant~\cite{GirolamiCalderhead2011}.  This class of samplers is based on defining a \emph{potential energy} function in terms of the target posterior distribution and then devising various continuous dynamics to explore the energy landscape, enabling proposals of distant states. %Based on the potential function, various stochastic dynamics can be designed, enabling proposals of distant states.
The gain in efficiency of exploration often comes at the cost of a significant computational burden in large datasets.

Recently, stochastic gradient variants of such continuous-dynamic samplers have proven quite useful in scaling the methods to large datasets~\cite{SGLD,Ahn:2012:SGFS,SGHMC,AhnShaWel2014,SGNHT}. At each iteration, these samplers use data subsamples---or \emph{minibatches}---rather than the full dataset.  Stochastic gradient Langevin dynamics (SGLD)~\cite{SGLD} innovated in this area by connecting stochastic optimization with a first-order Langevin dynamic MCMC technique, showing that adding the ``right amount'' of noise to stochastic gradient ascent iterates leads to samples from the target posterior as the step size is annealed.  Stochastic gradient Hamiltonian Monte Carlo (SGHMC)~\cite{SGHMC} builds on this idea, but importantly incorporates the efficient exploration provided by the HMC momentum term.  A key insight in that paper was that the na\"{i}ve stochastic gradient variant of HMC actually leads to an incorrect stationary distribution (also see \cite{Betancourt:ICML2015}); instead a modification to the dynamics underlying HMC is needed to account for the stochastic gradient noise.  Variants of both SGLD and SGHMC with further modifications to improve efficiency have also recently been proposed~\cite{Ahn:2012:SGFS,SGRLD,SGNHT}.

In the plethora of past MCMC methods that explicitly leverage continuous dynamics---including HMC, Riemann manifold HMC, and the stochastic gradient methods---the focus has been on showing that the intricate dynamics leave the target posterior distribution invariant.  Innovating in this arena requires constructing novel dynamics and simultaneously ensuring that the target distribution is the stationary distribution.  This can be quite challenging, and often requires significant physical and geometrical intuition~\cite{SGHMC,SGRLD,SGNHT}.  A natural question, then, is whether there exists a general recipe for devising such continuous-dynamic $\mbox{MCMC}$ methods that naturally lead to invariance of the target distribution.  In this paper, we answer this question to the affirmative. Furthermore, and quite importantly, our proposed recipe is \emph{complete}.  That is, any continuous Markov process (with no jumps) with the desired invariant distribution can be cast within our framework, including HMC, Riemann manifold HMC, SGLD, SGHMC, their recent variants, and any future developments in this area. That is, our method provides a unifying framework of past algorithms, as well as a practical tool for devising new samplers and testing the correctness of proposed samplers.

The recipe involves defining a (stochastic) system parameterized by two matrices: a positive semidefinite diffusion matrix, $\mD(\vz)$, and a skew-symmetric curl matrix, $\mQ(\vz)$, where $\vz=(\theta,r)$ with $\theta$ our model parameters of interest and $r$ a set of auxiliary variables. The dynamics are then written explicitly in terms of the target stationary distribution and these two matrices.  By varying the choices of $\mD(\vz)$ and $\mQ(\vz)$, we explore the space of MCMC methods that maintain the correct invariant distribution. We constructively prove the completeness of this framework by converting a general continuous Markov process into the proposed dynamic structure.

For any given $\mD(\vz)$, $\mQ(\vz)$, and target distribution, we provide practical algorithms for implementing either full-data or minibatch-based variants of the sampler.  In Sec.~\ref{sec:previous}, we cast many previous continuous-dynamic samplers in our framework, finding their $\mD(\vz)$ and $\mQ(\vz)$.  We then show how these existing $\mD(\vz)$ and $\mQ(\vz)$ building blocks can be used to devise new samplers; we leave the question of exploring the space of $\mD(\vz)$ and $\mQ(\vz)$ well-suited to the structure of the target distribution as an interesting direction for future research.  In Sec.~\ref{sec:SGRHMC} we demonstrate our ability to construct new and relevant samplers by proposing \emph{stochastic gradient Riemann Hamiltonian Monte Carlo}, the existence of which was previously only speculated.  We demonstrate the utility of this sampler on synthetic data and in a streaming Wikipedia analysis using latent Dirichlet allocation~\cite{Blei:2003:LDA}.
%Although our framework provides a recipe for how to take a new $\mD(\vz)$ and $\mQ(\vz)$ and write down a sampling algorithm with the correct invariant distribution---avoiding the significant algorithm-specific modifications and proofs required of previous methods---we leave the question of defining a procedure for exploring the space of $\mD(\vz)$ and $\mQ(\vz)$ well-suited to the structure of the target distribution as an interesting direction for future research.  Instead, in Sec.~\ref{sec:DQdiscussion} we provide intuition for guiding the choice of $\mD(\vz)$ and $\mQ(\vz)$.  Then, in Sec.~\ref{sec:SGRHMC} we demonstrate the utility of our framework by defining a new stochastic gradient MCMC sampler based on existing $\mD(\vz)$ and $\mQ(\vz)$ building blocks: \emph{stochastic gradient Riemann Hamiltonian Monte Carlo}, the existence of which was previously only speculated.  We provide an examination of this sampler and previous alternatives on synthetic data and in a streaming Wikipedia analysis using latent Dirichlet allocation~\cite{Blei:2003:LDA}.

\presec
\section{A Complete Stochastic Gradient MCMC Framework}
\label{sec:framework}
\postsec
%We present our framework for devising stochastic gradient MCMC algorithms by first defining a form of stochastic dynamics with the correct invariant distribution in Sec.~\ref{sec:SDE}. We then place previous methods explicitly within this framework in Sec.~\ref{sec:previous} and prove the completeness of the framework in Sec.~\ref{sec:complete}.
%\subsection{Devising SDEs with a Target Stationary Distribution}
%\label{sec:SDE}
We start with the standard MCMC goal of drawing samples from a target distribution, which we take to be the posterior $p(\theta| \mathcal{S})$ of model parameters $\theta \in \mathbb{R}^{d}$ given an observed dataset $\mathcal{S}$.  Throughout, we assume i.i.d. data $\rvx \sim p(\rvx| \theta)$.  We write $p(\theta|\mathcal{S}) \propto \exp(-U(\theta))$, with \emph{potential function}
$
U(\theta) = - \sum_{\rvx\in \mathcal{S}} \log p(\rvx|\theta) - \log p(\theta).
$
Algorithms like HMC~\cite{NealHMC,GirolamiCalderhead2011} further augment the space of interest with auxiliary variables $r$ and sample from $p(\vz|\mathcal{S}) \propto \exp(-H(\vz))$, with \emph{Hamiltonian}
\begin{align}
%	\begin{aligned}
H(\vz) = H(\theta, r) = U(\theta) + g(\theta, r), \quad \mbox{such that} \int \exp( -g(\theta, r) ) \rd r = constant.
%\end{aligned}
\label{eqn:U}
\end{align}
Marginalizing the auxiliary variables gives us the desired distribution on $\theta$. In this paper, we generically consider $\vz$ as the samples we seek to draw; $\vz$ could represent $\theta$ itself, or an augmented state space in which case we simply discard the auxiliary variables to perform the desired marginalization.

As in HMC, the idea is to translate the task of sampling from the posterior distribution to simulating from a continuous dynamical system which is used to define a Markov transition kernel.  That is, over any interval $h$, the differential equation defines a mapping from the state at time $t$ to the state at time $t+h$. One can then discuss the evolution of the distribution $p(\vz,t)$ under the dynamics, as characterized by the Fokker-Planck equation for stochastic dynamics~\cite{FokkerPlanck} or the Liouville equation for deterministic dynamics~\cite{Liouville}. This evolution can be used to analyze the \emph{invariant distribution} of the dynamics, $p^s(\vz)$.  When considering deterministic dynamics, as in HMC, a jump process must be added to ensure ergodicity.  If the resulting stationary distribution is equal to the target posterior, then simulating from the process can be equated with drawing samples from the posterior.

If the stationary distribution is \emph{not} the target distribution, a Metropolis-Hastings (MH) correction can often be applied. %be straightforwardly applied assuming the process is time-reversible and detailed balance is met.
Unfortunately, such correction steps require a costly computation on the entire dataset.  Even if one can compute the MH correction, if the dynamics do not \emph{nearly} lead to the correct stationary distribution, then the rejection rate can be high even for short simulation periods $h$.  Furthermore, for many stochastic gradient MCMC samplers, computing the probability of the reverse path is infeasible, obviating the use of MH.  As such, a focus in the literature is on defining dynamics with the right target distribution, especially in large-data scenarios where MH corrections are computationally burdensome or infeasible.

\pressec
\subsection{Devising SDEs with a Specified Target Stationary Distribution}
\label{sec:SDE}
\postssec
Generically, all continuous Markov processes that one might consider for sampling can be written as a stochastic differential equation (SDE) of the form:
\begin{align}
\rd \vz = \vf(\vz) \rd t + \sqrt{2 {\mD}(\vz)} \rd {\mW}, \label{SDE1}
\end{align}
where $\vf(\vz)$ denotes the deterministic drift and often relates to the gradient of $H(\vz)$, ${\mW}$ is a $d$-dimensional Wiener process, and $\mD(\vz)$ is a positive semidefinite diffusion matrix.
%Over any interval $h$, the dynamics of Eq.~\eqref{SDE1} define a mapping from the state at time $t$ to the state at time $t+h$.
Clearly, however, not all choices of $\vf(\vz)$ and $\mD(\vz)$ yield the stationary distribution $p^s(\vz) \propto \exp(-H(\vz))$.
%(Note that taking $\mathbf{D}(\vz)$ to be strictly positive definite ensures the existence and uniqueness of the stationary distribution.)

When $\mD(\vz)=0$, as in HMC, the dynamics of Eq.~\eqref{SDE1} become deterministic.  Our exposition focuses on SDEs, but our analysis applies to deterministic dynamics as well.  In this case, our framework---using the Liouville equation in place of Fokker-Planck---ensures that the deterministic dynamics leave the target distribution invariant.  For ergodicity, a jump process must be added, which is not considered in our recipe, but tends to be straightforward (e.g., momentum resampling in HMC).

To devise a recipe for constructing SDEs with the correct stationary distribution, we propose writing $\vf(\vz)$ directly in terms of the target distribution:
\begin{align}
{\vf}(\vz) = - \big[{\mD}(\vz) + {\mQ}(\vz)\big] \nabla H(\vz) + \Gamma(\vz), \quad  \Gamma_i(\vz) = \sum_{j=1}^d \dfrac{\partial}{\partial \vz_j} \big({\mD}_{ij}(\vz) + {\mQ}_{ij}(\vz)\big) . \label{CorrectDrift}
\end{align}
Here, $\mQ(\vz)$ is a skew-symmetric curl matrix representing the deterministic traversing effects seen in HMC procedures.  In contrast, the diffusion matrix $\mD(\vz)$ determines the strength of the Wiener-process-driven diffusion. Matrices $\mD(\vz)$ and $\mQ(\vz)$ can be adjusted to attain faster convergence to the posterior distribution.  A more detailed discussion on the interpretation of $\mD(\vz)$ and $\mQ(\vz)$ and the influence of specific choices of these matrices is provided in the Supplement.

Importantly, as we show in Theorem~\ref{thm:stationarydist}, sampling the stochastic dynamics of Eq.~\eqref{SDE1} (according to It\^o integral) with $\vf(\vz)$ as in Eq.~\eqref{CorrectDrift} leads to the desired posterior distribution as the stationary distribution: $p^s(\vz) \propto \exp(-H(\vz))$.  That is, for any choice of positive semidefinite $\mD(\vz)$ and skew-symmetric $\mQ(\vz)$ parameterizing $\vf(\vz)$, we know that simulating from Eq.~\eqref{SDE1} will provide samples from $p(\theta \mid \mathcal{S})$ (discarding any sampled auxiliary variables $r$) assuming the process is ergodic.

\begin{theorem}\label{thm:stationarydist}
   $p^s(\vz) \propto \exp(-H(\vz))$ is a stationary distribution of the dynamics of Eq.~\eqref{SDE1} if $\vf(\vz)$ is restricted to the form of Eq.~\eqref{CorrectDrift}, with $\mD(\vz)$ positive semidefinite and $\mQ(\vz)$ skew-symmetric. If $\mD(\vz)$ is positive definite, or if ergodicity can be shown, then the stationary distribution is unique.
\end{theorem}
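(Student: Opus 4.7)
I would approach this via the Fokker--Planck equation associated with the It\^{o} SDE~\eqref{SDE1}, written in divergence (probability-current) form: stationarity of $p^s(\vz) \propto \exp(-H(\vz))$ is equivalent to
\begin{equation*}
\sum_i \partial_i J_i^s = 0, \qquad J_i^s \;=\; \vf_i(\vz)\, p^s(\vz) - \sum_{j} \partial_j\bigl[\mD_{ij}(\vz)\, p^s(\vz)\bigr].
\end{equation*}
So the entire game is to substitute~\eqref{CorrectDrift} into $J_i^s$, simplify, and verify the divergence vanishes.

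First, I would simplify $J_i^s$. The crucial identity is $\partial_j p^s = -p^s\, \partial_j H$, which makes every $\mD$-term drop out: the drift contribution $-\mD_{ij}(\partial_j H)\,p^s$ cancels $-\mD_{ij}\,\partial_j p^s$ from the current, while the $\partial_j \mD_{ij}$ piece of $\Gamma_i$ cancels $(\partial_j \mD_{ij})\,p^s$ produced by the same product rule. What remains is a purely $\mQ$-driven current
\begin{equation*}
J_i^s \;=\; p^s(\vz) \sum_j \bigl[-\mQ_{ij}(\vz)\,\partial_j H + \partial_j \mQ_{ij}(\vz)\bigr].
\end{equation*}

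Second, I would expand $\sum_i \partial_i J_i^s$ with the product rule and dispatch each surviving term using skew-symmetry of $\mQ$ paired with a symmetric partner: (a) $\sum_{ij}\mQ_{ij}(\partial_i H)(\partial_j H)$ and $\sum_{ij}\mQ_{ij}\,\partial_i\partial_j H$ vanish because an antisymmetric tensor is contracted with a symmetric one; (b) $\sum_{ij}\partial_i\partial_j\mQ_{ij}$ vanishes by swapping dummy indices and invoking $\mQ_{ji}=-\mQ_{ij}$ together with equality of mixed partials; (c) the remaining cross terms $\sum_{ij}(\partial_i H)(\partial_j\mQ_{ij})$ and $\sum_{ij}(\partial_j H)(\partial_i\mQ_{ij})$ combine, after relabeling $i\leftrightarrow j$ in one of them, into $\sum_{ij}(\partial_i H)\bigl[\partial_j\mQ_{ij}+\partial_j\mQ_{ji}\bigr] = 0$. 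This establishes invariance of $p^s$.

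For the uniqueness statement, I would invoke standard diffusion theory: when $\mD(\vz)$ is strictly positive definite the generator is uniformly elliptic, so any integrable invariant density is unique up to normalization; otherwise, uniqueness follows directly from the assumed ergodicity via the ergodic theorem. The main obstacle is the bookkeeping in the second step---making sure that \emph{every} cross term in the divergence really cancels without invoking any hidden hypothesis on $\nabla\mQ$ or $\nabla\mD$ beyond skew-symmetry and positive semidefiniteness, and that the It\^{o} (as opposed to Stratonovich) convention is respected throughout, since the $\vz$-dependence of $\mD$ is what forces the particular form of $\Gamma$ in~\eqref{CorrectDrift}.
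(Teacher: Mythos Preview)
Your argument is correct and uses the same ingredients as the paper---the Fokker--Planck equation, the identity $\partial_j p^s = -p^s\,\partial_j H$, and the skew-symmetry of $\mQ$---but the organization differs. The paper first rewrites the Fokker--Planck equation, for \emph{arbitrary} $p$, into the compact form
\[
\partial_t p \;=\; \nabla^T \cdot \Bigl( [\mD(\vz)+\mQ(\vz)]\bigl[p\,\nabla H + \nabla p\bigr] \Bigr),
\]
using skew-symmetry once (to kill $\sum_{ij}\partial_i\partial_j(\mQ_{ij}p)$). Invariance of $p^s\propto e^{-H}$ is then a one-line observation: the inner bracket $p^s\nabla H + \nabla p^s$ vanishes identically. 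You instead substitute $p^s$ first, obtain a \emph{nonzero} residual current $J_i^s = p^s\sum_j(-\mQ_{ij}\partial_j H + \partial_j\mQ_{ij})$, and then must track four separate cancellations to show it is divergence-free. Both routes are fine; the paper's buys a cleaner endgame at the cost of proving the compact form up front, while yours is more pedestrian but self-contained. (A small shortcut you may enjoy: your $J_i^s$ equals $\sum_j\partial_j(\mQ_{ij}\,p^s)$, so $\sum_i\partial_i J_i^s = \sum_{ij}\partial_i\partial_j(\mQ_{ij}\,p^s)=0$ in one stroke, which is exactly the single skew-symmetry step the paper uses.)
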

\begin{proof}
	The equivalence of $p^s(\vz)$ and the target $p(\vz \mid \mathcal{S})\propto\exp(-H(\vz))$ can be shown using the Fokker-Planck description of the probability density evolution under the dynamics of Eq.~\eqref{SDE1} :
	\begin{align}
	\partial_t p (\vz, t)
	=& - \sum\limits_i \dfrac{\partial}{\partial \vz_i} \big({\vf}_i(\vz) p(\vz,t) \big)
	+ \sum\limits_{i,j} \dfrac{\partial^2}{\partial \vz_i \partial \vz_j}
	\big({\mD}_{ij}(\vz) p(\vz,t) \big).\label{eqn:FokkerPlank}
	\end{align}
	Eq.~\eqref{eqn:FokkerPlank} can be further transformed into a more compact form~\cite{PAo,JShi}:
	%\begin{align}
	%\partial_t\rho (\vz, t)
	%= \nabla^T \cdotp
	%\Big(  \big({\mD}(\vz) + {\mQ}(\vz)\big)
	%\nabla U(\vz) \rho(\vz, t) \Big)
	%+ \nabla^T \cdot
	%\Big( \big({\mD}(\vz) + {\mQ}(\vz)\big) \nabla \rho(\vz, t) \Big).  \label{Ao}
	%\end{align}
	\begin{align}
	\partial_t p (\vz, t)
	=& \nabla^T \cdot
	\Big(  \left[\mD(\vz) + \mQ(\vz)\right]\left[p(\vz, t) \nabla H(\vz) + \nabla p(\vz, t) \right]\Big).  \label{Ao}
	\end{align}
	We can verify that $p(\vz \mid \mathcal{S})$ is invariant under Eq.~\eqref{Ao} by calculating $\left[ e^{ - H(\vz)} \nabla H(\vz) + \nabla e^{ - H(\vz)} \right] = 0$. If the process is ergodic, this invariant distribution is unique. The equivalence of the compact form was originally proved in~\cite{JShi}; we include a detailed proof in the Supplement for completeness.
\end{proof}

\pressec
\subsection{Completeness of the Framework}
\label{sec:completeness}
\postssec
An important question is what portion of samplers defined by continuous Markov processes with the target invariant distribution can we define by iterating over all possible $\mD(\vz)$ and $\mQ(\vz)$?  In Theorem~\ref{thm:complete}, we show that for \emph{any} continuous Markov process with the desired stationary distribution, $p^s(\vz)$, there exists an SDE as in Eq.~\eqref{SDE1} with $\vf(\vz)$ defined as in Eq.~\eqref{CorrectDrift}.  We know from the Chapman-Kolmogorov equation~\cite{ChapmanKolmogorov} that any continuous Markov process with stationary distribution $p^s(\vz)$ can be written as in Eq.~\eqref{SDE1}, which gives us the diffusion matrix $\mD(\vz)$.  Theorem~\ref{thm:complete} then constructively defines the curl matrix $\mQ(\vz)$.  This result implies that our recipe is \emph{complete}. That is, we cover \emph{all} possible continuous Markov process samplers in our framework.  See Fig.~\ref{fig:BigPic}.

\begin{theorem}
For the SDE of Eq.~\eqref{SDE1}, suppose its stationary probability density function $p^s(\vz)$ uniquely exists, and that $\left[\vf_i(\vz) p^s(\vz) - \sum_{j=1}^d \dfrac{\partial}{\partial \theta_j} \Big({\mD}_{ij}(\vz) p^s(\vz)\Big)\right]$ is integrable with respect to the Lebesgue measure,
then there exists a skew-symmetric $\mQ(\vz)$ such that Eq.~\eqref{CorrectDrift} holds.
\label{thm:complete}
\end{theorem}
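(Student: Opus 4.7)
The plan is to recast Eq.~\eqref{CorrectDrift} as the existence of a skew-symmetric ``vector potential'' for the stationary probability current, and then construct that potential via the Poincar\'e lemma. The first move is a purely algebraic reduction. The theorem implicitly fixes $H$ by $p^s \propto e^{-H}$, so $\nabla p^s = -p^s \nabla H$, and multiplying Eq.~\eqref{CorrectDrift} by $p^s$ and applying the product rule collapses the drift plus $\Gamma$ correction into a single divergence,
\begin{align*}
\vf_i(\vz)\, p^s(\vz) \;=\; \sum_{j=1}^d \partial_j\bigl[(\mD_{ij}(\vz) + \mQ_{ij}(\vz))\, p^s(\vz)\bigr].
\end{align*}
Defining $\tilde{\mQ}_{ij} := \mQ_{ij} p^s$ and the probability current $\mJ_i(\vz) := \vf_i(\vz) p^s(\vz) - \sum_j \partial_j(\mD_{ij}(\vz) p^s(\vz))$, the claim reduces to producing a skew-symmetric matrix field $\tilde{\mQ}$ with $\sum_j \partial_j \tilde{\mQ}_{ij} = \mJ_i$; since $p^s > 0$, the assignment $\mQ_{ij} := \tilde{\mQ}_{ij}/p^s$ then yields the desired skew-symmetric $\mQ$.

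Next I would extract the necessary compatibility condition and build $\tilde{\mQ}$. The Fokker-Planck equation~\eqref{eqn:FokkerPlank} rewrites as $\partial_t p = -\nabla \cdot \mJ$, so stationarity of $p^s$ forces $\nabla \cdot \mJ = 0$. Producing a skew-symmetric $\tilde{\mQ}$ whose row-divergence equals $\mJ$ is then the $d$-dimensional analog of representing a divergence-free vector field as a curl: $\mJ$ corresponds to a closed $(d-1)$-form on the contractible space $\mathbb{R}^d$, which by the Poincar\'e lemma admits a $(d-2)$-form primitive, and $(d-2)$-forms are in natural bijection with skew-symmetric $d\times d$ tensors. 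An explicit realization via the standard homotopy operator is
\begin{align*}
\tilde{\mQ}_{ij}(\vz) \;=\; \int_0^1 s^{d-2}\,\bigl[\vz_j\, \mJ_i(s\vz) - \vz_i\, \mJ_j(s\vz)\bigr]\, \mathrm{d}s,
\end{align*}
which is manifestly skew and, upon differentiating under the integral, invoking $\nabla \cdot \mJ = 0$, and integrating by parts once in $s$, is readily verified to satisfy $\sum_j \partial_j \tilde{\mQ}_{ij} = \mJ_i$.

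I expect the main obstacle to be this last construction step. The algebraic reduction and the divergence-freeness of $\mJ$ follow immediately from Eq.~\eqref{CorrectDrift} and Fokker-Planck respectively, but the Poincar\'e step requires justifying differentiation under the integral and controlling the ray integrals $s \mapsto \mJ(s\vz)$---which is precisely where the hypothesized Lebesgue integrability of $\mJ$ enters---and handling the degenerate case $d=1$ separately (skew-symmetric $1\times 1$ matrices vanish, but integrability together with $\partial_1 \mJ_1 = 0$ forces $\mJ \equiv 0$, so the claim holds trivially with $\tilde{\mQ} = 0$).
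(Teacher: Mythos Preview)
Your reduction is exactly the paper's first move: multiply by $p^s$, absorb the $\Gamma$ term, and reduce to constructing a skew-symmetric $\tilde{\mQ}$ whose row-divergence equals the stationary current $\mJ$, which Fokker--Planck forces to be divergence-free. From there the two arguments diverge. The paper works in Fourier space: transforming $\tilde{\mQ}\mapsto\hat{\mQ}$ and $\mJ\mapsto\hat{\mF}$ converts the PDE $\sum_j\partial_j\tilde{\mQ}_{ij}=\mJ_i$ into the linear-algebraic system $2\pi\mathrm{i}\,\hat{\mQ}\vk=\hat{\mF}$ with $\vk^T\hat{\mF}=0$, solved explicitly by $\hat{\mQ}=(2\pi\mathrm{i})^{-1}(\hat{\mF}\vk^T-\vk\hat{\mF}^T)/\|\vk\|^2$, and then inverts the transform. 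Your homotopy formula $\tilde{\mQ}_{ij}(\vz)=\int_0^1 s^{d-2}[\vz_j\mJ_i(s\vz)-\vz_i\mJ_j(s\vz)]\,\mathrm{d}s$ is the real-space Poincar\'e primitive; the verification you sketch is correct (the integrand becomes $\frac{d}{ds}[s^{d-1}\mJ_i(s\vz)]$ after using $\nabla\cdot\mJ=0$). Each route buys something different: the Fourier construction is why the paper needs the $L^1$ hypothesis on $\mJ$ (to define $\hat{\mF}$) and then has to argue separately that the inverse transform is real-valued; your homotopy operator avoids Fourier machinery and is manifestly real and skew, but the $L^1$ hypothesis is not actually what controls your ray integrals---those need only local boundedness of $\mJ$ for $d\ge 2$---so your remark that ``this is precisely where the hypothesized Lebesgue integrability enters'' is not quite right, and you should state the regularity you actually use.
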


The integrability condition is usually satisfied when the probability density function uniquely exists.
A constructive proof for the existence of ${\mQ}(\vz)$ is provided in the Supplement.

\begin{SCfigure}[2]
\includegraphics[width=0.3\columnwidth]{./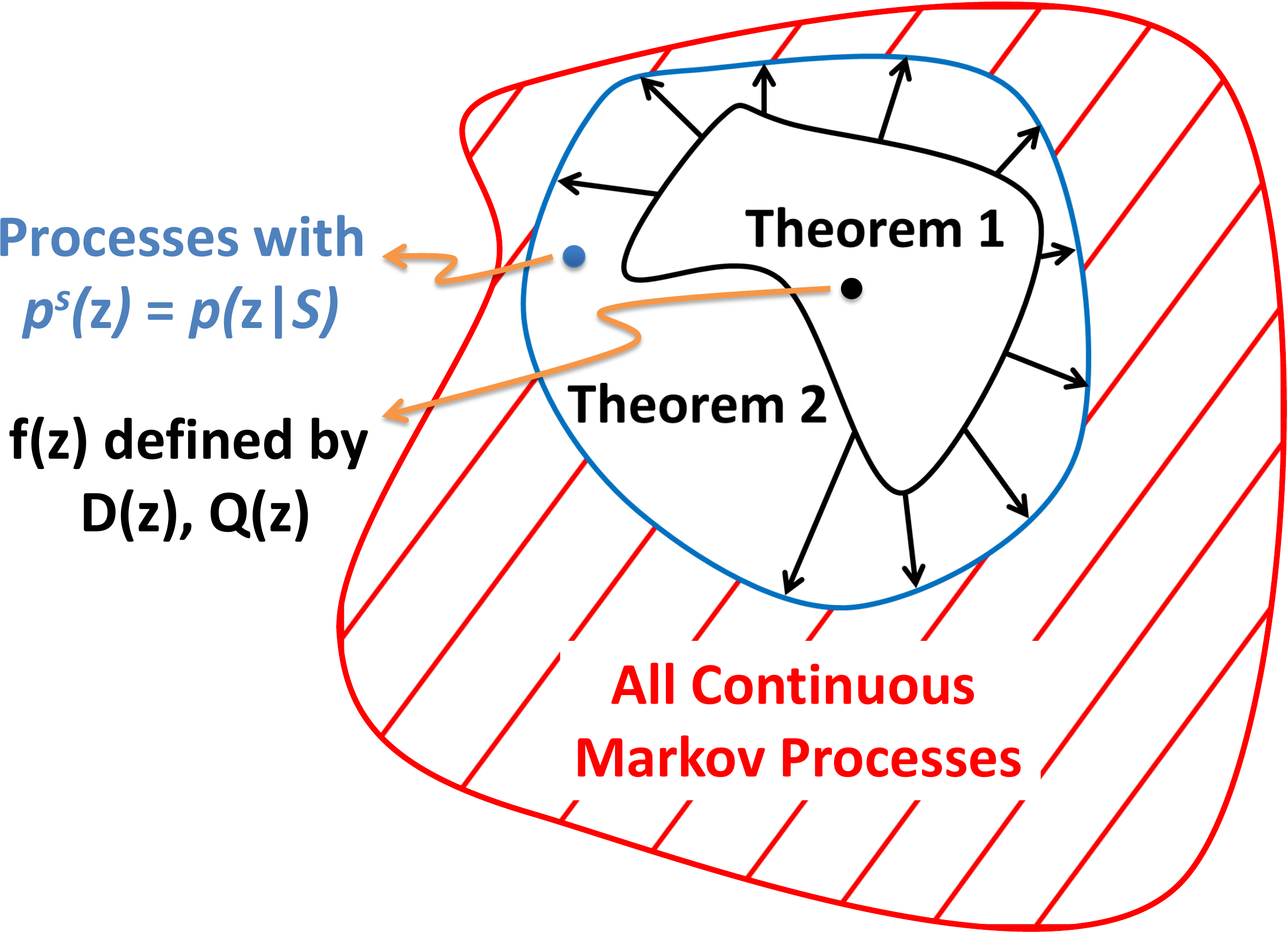} \hspace{0.12in}
\vspace{0.1in}\caption{\small %Illustration of the completeness of our framework.
The red space represents the set of all continuous Markov processes.  A point in the black space represents a continuous Markov process defined by Eqs.~\eqref{SDE1}-\eqref{CorrectDrift} based on a specific choice of $\mD(\vz),\mQ(\vz)$.  By Theorem~\ref{thm:stationarydist}, each such point has stationary distribution $p^s(\vz)=p(\vz\mid \mathcal{S})$.  The blue space represents \emph{all} continuous Markov processes with $p^s(\vz)=p(\vz\mid \mathcal{S})$.  Theorem~\ref{thm:complete} states that these blue and black spaces are equivalent (there is no gap, and any point in the blue space has a corresponding $\mD(\vz),\mQ(\vz)$ in our framework). }\label{fig:BigPic} \postcap
\end{SCfigure}

\pressec
\subsection{A Practical Algorithm}
\label{sec:discretization}
\postssec
In practice, simulation relies on an $\epsilon$-discretization of the SDE, leading to a \textbf{full-data} update rule
\begin{equation}
\begin{tiny}
    \vz_{t+1} \leftarrow \vz_{t} -  \eps_t \left[\big({\mD}(\vz_t) + {\mQ}(\vz_t)\big) \nabla H(\vz_t)
+ \Gamma(\vz_t)\right] + \mathcal{N}(0, 2 \eps_t \mD(\vz_t)). \label{eq:simple-update}
\end{tiny}
\end{equation}
Calculating the gradient of $H(\vz)$ involves evaluating the gradient of $U(\theta)$.
For a stochastic gradient method, the assumption is that $U(\theta)$ is too computationally intensive to compute as it relies on a sum over all data points (see Sec.~\ref{sec:framework}). Instead, such stochastic gradient algorithms examine \emph{independently sampled} data subsets $\widetilde{\mathcal{S}} \subset \mathcal{S}$ and the corresponding potential for these data:
\begin{align}
\widetilde{U}(\theta) = -\dfrac{|\mathcal{S}|}{|\widetilde{\mathcal{S}}|} \sum_{{\rvx}\in \widetilde{\mathcal{S}}} \log p({\rvx}|\theta) - \log p(\theta); \quad \widetilde{\mathcal{S}} \subset \mathcal{S}.
\label{eqn:tildeU}
\end{align}
The specific form of Eq.~\eqref{eqn:tildeU} implies that $\widetilde{U}(\theta)$ is an unbiased estimator of $U(\theta)$.  As such, a gradient computed based on $\widetilde{U}(\theta)$---called a \emph{stochastic gradient}~\cite{SGD}---is a noisy, but unbiased estimator of the full-data gradient.  The key question in many of the existing stochastic gradient MCMC algorithms is whether the noise injected by the stochastic gradient adversely affects the stationary distribution of the modified dynamics (using $\nabla\widetilde{U}(\theta)$ in place of $\nabla U(\theta)$). One way to analyze the impact of the stochastic gradient is to make use of the central limit theorem and assume
\begin{equation}
\nabla \psU(\theta) = \nabla U(\theta ) +\mathcal{N}(0, \mV(\theta)), \label{eqn:noisygrad}
\end{equation}
resulting in a noisy Hamiltonian gradient $\nabla \widetilde{H}(\vz) = \nabla H(\vz)+[\mathcal{N}(0,\mV(\theta)), \vzero]^T$.  Simply plugging in $\nabla \widetilde{H}(\vz)$ in place of $\nabla H(\vz)$ in Eq.~\eqref{eq:simple-update} results in dynamics with an additional noise term $({\mD}(\vz_t) + {\mQ}(\vz_t)\big)[\mathcal{N}(0,\mV(\theta)), \vzero]^T$.  To counteract this, assume we have an estimate $\hat{\mB}_t$ of the variance of this additional noise satisfying $2\mD(\vz_t) - \eps_t \hat{\mB}_t \succeq 0$ (i.e., positive semidefinite).
With small $\epsilon$, this is always true since the stochastic gradient noise %$\eps \mV(\theta)$
scales down faster than the added noise.
Then, we can attempt to account for the stochastic gradient noise by simulating
% and an update
%\begin{equation*}
%\begin{tiny}
%    \vz_{t+1} \leftarrow \vz_{t} -  \eps_t \left[\big({\mD}(\vz_t) + {\mQ}(\vz_t)\big) \nabla H(\vz_t)
%+ \Gamma(\vz_t)\right] + \mathcal{N}(0, 2 \eps_t \mD(\vz_t)) + \eps_t \xi_t, \label{eq:noisy-update}
%\end{tiny}
%\end{equation*}
%where the additional noise $\xi_t \sim  ({\mD}(\vz_t) + {\mQ}(\vz_t)\big)[\mathcal{N}(0,\mV(\theta)), \vzero]^T $ comes from the stochastic gradient. Assuming we have an estimate $\hat{\mB}_t$ of the variance of $\xi_t$ satisfying $2\mD(\vz_t) \succeq \eps_t \hat{\mB}_t$ (i.e., $2\mD(\vz_t) - \eps_t \hat{\mB}_t$ is positive (semi)definite), the update rule can be modified to take the stochastic gradient noise term into account by subtracting $\hat{\mB}_t$ from the sampler-injected diffusion noise. \textbf{Our proposed update rule becomes:}
\begin{equation}
\begin{tiny}
    \vz_{t+1} \leftarrow \vz_{t} -  \eps_t \left[\big({\mD}(\vz_t) + {\mQ}(\vz_t)\big) \nabla \widetilde{H}(\vz_t)
+ \Gamma(\vz_t)\right] + \mathcal{N}(0, \eps_t (2\mD(\vz_t) - \eps_t \hat{\mB}_t)).  \label{eq:true-update}
\end{tiny}
\end{equation}
This provides our \textbf{stochastic gradient}---or \emph{\textbf{minibatch}}--- variant of the sampler.  In Eq.~\eqref{eq:true-update}, the noise introduced by the stochastic gradient is multiplied by $\eps_t$ (and the compensation by $\eps_t^2$), implying that the discrepancy between these dynamics and those of Eq.~\eqref{eq:simple-update} approaches zero as $\eps_t$ goes to zero. As such, in this infinitesimal step size limit, since Eq.~\eqref{eq:simple-update} yields the correct invariant distribution, so does Eq.~\eqref{eq:true-update}.
%Importantly, under our proposed stochastic gradient MCMC framework, any sampling procedure we design will meet the condition of no bias in the infinitesimal step size limit since the continuous-time system maintains the target distribution as the invariant distribution (see Theorem~\ref{thm:stationarydist}).
This avoids the need for a costly or potentially intractable MH correction.  However, having to decrease $\eps_t$ to zero comes at the cost of increasingly small updates. We can also use a finite, small step size in practice, resulting in a biased~(but faster) sampler. A similar bias-speed tradeoff was used in~\cite{Korattikara:2013austerity,Bardenet} to construct MH samplers, in addition to being used in SGLD and SGHMC.

%In the case of a finite discretization, a Metropolis Hastings step is required to correct for the error, typically requiring a costly computation on the entire dataset, which is the large-data challenge we sought to avoid in the first place. Additionally, such a costly correction may not even be straightforward when detailed balance is not met. Both issues are avoided by considering a diminishing step size,

\presec
\section{Applying the Theory to Construct Samplers}
\presec
\subsection{Casting Previous MCMC Algorithms within the Proposed Framework}
\label{sec:previous}
\postssec
We explicitly state how some recently developed MCMC methods fall within the proposed framework based on specific choices of $\mD(\vz)$, $\mQ(\vz)$ and $H(\vz)$ in Eq.~\eqref{SDE1} and \eqref{CorrectDrift}. For the stochastic gradient methods, we show how our framework can be used to ``reinvent'' the samplers by guiding their construction and avoiding potential mistakes or inefficiencies caused by na\"{i}ve implementations.

\prepar
\paragraph{Hamiltonian Monte Carlo (HMC)}
The key ingredient in HMC~\cite{Duane:1987HMC,NealHMC} is Hamiltonian dynamics, which simulate the physical motion of an object with position $\theta$, momentum $r$, and mass $\mM$ on an frictionless surface as follows~(typically, a leapfrog simulation is used instead):
\begin{align}
\left\{
\begin{array}{l}
       \theta_{t+1} \leftarrow \theta_t + \eps_t{\mM}^{-1} r_t \\
      r_{t+1} \leftarrow r_t - \eps_t \nabla U(\theta_t).
\end{array}
\right. \label{eq:HMC}
\end{align}
Eq.~\eqref{eq:HMC} is a special case of the proposed framework with $\vz = (\theta, r)$,
$H(\theta, r) =  U(\theta) + \frac{1}{2} r^T M^{-1} r$,  ${\mQ}(\theta, r) =
\left(
\begin{array}{ll}
         0 & -\mI \\
         \mI & 0
\end{array}
\right)$ and $\mD(\theta, r) = \vzero$. %Theorem~\ref{thm:stationarydist} immediately implies that $p^s(\theta, r)\propto\exp(-H(\theta, r))$ is invariant under the dynamics, which is one of the key property that leads to correctness of HMC.

\prepar
\paragraph{Stochastic Gradient Hamiltonian Monte Carlo (SGHMC)}
%As mentioned in the beginning of the section, $U(\theta)$ is too computationally intensive to compute, and we usually relies on a noised gradient $\psU(\theta)$ instead. We further make use of central limit theorem, to assume the noise part of gradient can be expressed explicitly as $\nabla \psU(\theta) = \nabla U(\theta ) +\mathcal{N}(0, V(\theta))$.
As discussed in~\cite{SGHMC}, simply replacing $\nabla U(\theta)$ by the stochastic gradient $\nabla \psU(\theta)$ in Eq.~\eqref{eq:HMC} results in the following updates:
\begin{align}
{\rm Naive:} \left\{
\begin{array}{l}
      \theta_{t+1} \leftarrow \theta_{t} + \eps_t {\mM}^{-1} r_t  \\
      r_{t+1} \leftarrow r_{t} - \eps_t \nabla \psU(\theta_t) \approx r_{t} - \eps_t \nabla U(\theta_t) + \mathcal{N}(0, \eps_t^2 \mV(\theta_t)),
        \end{array}
\right. \label{eq:NaiveSGHMC}
\end{align}
where the $\approx$ arises from the approximation of Eq.~\eqref{eqn:noisygrad}.  Careful study shows that Eq.~\eqref{eq:NaiveSGHMC} \emph{cannot} be rewritten into our proposed framework, which hints that such a na\"{i}ve stochastic gradient version of HMC is not correct.  Interestingly, the authors of~\cite{SGHMC} proved that this na\"{i}ve version indeed does not have the correct stationary distribution. In our framework, we see that the noise term $\mathcal{N}(0, 2 \eps_t \mD(\vz))$ is paired with a $\mD(\vz) \nabla H(\vz)$ term, hinting that such a term should be added to Eq.~\eqref{eq:NaiveSGHMC}. %Since we cannot remove the stochastic gradient noise from the system, we can add the $\mD(\vz) \nabla H(\vz)$ term to counteract the noise.
Here, ${\mD}(\theta, r) =
\left(
\begin{array}{ll}
         0 & 0 \\
         0 & \eps \mV(\theta)
\end{array}
\right)$, which means we need to add $\mD(\vz) \nabla H(\vz) =  \eps \mV(\theta) \nabla_r H(\theta, r)=\eps \mV(\theta) \mM^{-1}r$. Interestingly, this is the correction strategy proposed in~\cite{SGHMC}, but through a physical interpretation of the dynamics.  In particular, the term $\eps \mV(\theta)\mM^{-1}r$ (or, generically, $\mC\mM^{-1}r$ where $\mC \succeq \eps \mV(\theta)$) has an interpretation as friction and leads to second order Langevin dynamics:
\begin{align}
\left\{
\begin{array}{l}
      \theta_{t+1} \leftarrow \theta_t  + \eps_t {\mM}^{-1} r_t  \\
      r_{t+1} \leftarrow r_t - \eps_t \nabla \psU(\theta_t)  - \eps_t {\mC}{\mM}^{-1} r_t  + \mathcal{N}(0, \eps_t (2\mC - \eps_t \hat{\mB}_t)).
\end{array}
\right. \label{eq:SGHMC}
\end{align}
Here, $\hat{\mB}_t$ is an estimate of $\mV(\theta_t)$. This method now fits into our framework with $H(\theta,r)$ and $\mQ(\theta,r)$ as in HMC, but with
${\mD}(\theta, r) =
\left(
\begin{array}{ll}
         0 & 0 \\
         0 & \mC
\end{array}
\right)$.
This example shows how our theory can be used to identify invalid samplers and provide guidance on how to effortlessly correct the mistakes; this is crucial when physical intuition is not available.
%, without relying on physical intuition (e.g., a friction term in SGHMC) that may not be readily available.  Additionally,
Once the proposed sampler is cast in our framework with a specific $\mD(\vz)$ and $\mQ(\vz)$, there is no need for sampler-specific proofs, such as those of~\cite{SGHMC}.
%and correct mistakes caused by wrong intuition.This re-invention procedure of SGHMC shows that how our theory can be used to design new samplers and correct mistakes caused by wrong intuition.

\prepar
\paragraph{Stochastic Gradient Langevin Dynamics (SGLD)} SGLD~\cite{SGLD} proposes to use the following first order (no momentum) Langevin dynamics to
generate samples
\begin{equation}
\theta_{t+1} \leftarrow \theta_{t} - \eps_t \mD \nabla \psU(\theta_t) + \mathcal{N}(0, 2 \eps_t\mD) . \label{eq:SGLD}
\end{equation}
This algorithm corresponds to taking $\vz=\theta$ with $H(\theta) = U(\theta)$, ${\mD}(\theta) = \mD$, ${\mQ}(\theta) = \vzero$, and $\hat{\mB}_t= \vzero$.
As motivated by Eq.~\eqref{eq:true-update} of our framework, the variance of the stochastic gradient can be subtracted from the sampler injected noise to make the finite stepsize simulation more accurate. This variant of SGLD leads to the stochastic gradient Fisher scoring algorithm~\cite{Ahn:2012:SGFS}.

\prepar
\paragraph{Stochastic Gradient Riemannian Langevin Dynamics (SGRLD)} SGLD can be generalized to use an adaptive diffusion matrix ${\mD}(\theta)$. Specifically, it is interesting to take ${\mD}(\theta)= \mG^{-1}(\theta)$, where $\mG(\theta)$ is the Fisher information metric. The sampler dynamics are given by
\begin{align}
\theta_{t+1} \leftarrow \theta_t - \eps_t [ {\mG}(\theta_t)^{-1} \nabla \psU(\theta_t)  + \Gamma(\theta_t)] + \mathcal{N}(0, 2 \eps_t{\mG}(\theta_t)^{-1}) .
\label{eq:SGRLD}
\end{align}
Taking $\mD(\theta) = {\mG}(\theta)^{-1}$, ${\mQ}(\theta) = \vzero$, and $\hat{\mB}_t= \vzero$, this SGRLD~\cite{SGRLD} method falls into our framework with correction term $\Gamma_i(\theta) = \sum\limits_j \dfrac{\partial {\mD}_{ij} (\theta)}{\partial \theta_j}$. It is interesting to note that in earlier literature~\cite{GirolamiCalderhead2011}, $\Gamma_i(\theta)$ was taken to be $2 \ |\mG(\theta)|^{-1/2} \sum\limits_j \dfrac{\partial}{\partial \theta_j}\left(\mG_{ij}^{-1}(\theta) |\mG(\theta)|^{1/2} \right)$.
More recently, it was found that this correction term corresponds to the distribution function with respect to a non-Lebesgue measure~\cite{MALA}; for the Lebesgue measure, the revised $\Gamma_i(\theta)$ was as determined by our framework~\cite{MALA}.  Again, we have an example of our theory providing guidance in devising correct samplers.

\prepar
\paragraph{Stochastic Gradient Nos\'e-Hoover Thermostat (SGNHT)} Finally, the SGNHT~\cite{SGNHT} method incorporates ideas from thermodynamics to further increase adaptivity by augmenting the SGHMC system with an additional scalar auxiliary variable, $\xi$. The algorithm uses the following dynamics:
\begin{align}
\left\{
\begin{array}{l}
\theta_{t+1} \leftarrow \theta_t + \eps_t r_t \\
r_{t+1} \leftarrow r_{t} - \eps_t \nabla \widetilde{U}(\theta_t)  - \eps_t \xi_t r_t  + \mathcal{N}(0,  \eps_t (2 A - \eps_t \hat{\mB}_t)) \\
\xi_{t+1} \leftarrow \xi_t + \eps_t \left(\dfrac1d r_t^T r_t - 1 \right).
\end{array}
\right.
\end{align}
We can take $\vz=(\theta,r,\xi)$,
$H(\theta, r, \xi) = U(\theta) + \dfrac12 r^T r + \dfrac{1}{2d} (\xi-A)^2$,
${\mD}(\theta, r,\xi) =
\begin{tiny}
\left(
\begin{array}{ccc}
         0 & 0 & 0\\
         0 & A\cdot \mI & 0\\
         0 & 0 & 0
\end{array}
\right)\end{tiny}$,
and
${\mQ}(\theta, r,\xi) =
\begin{tiny}
\left(
\begin{array}{ccc}
         0 & -\mI & 0\\
         \mI & 0 & r / d\\
         0 & -r^T / d & 0
\end{array}
\right)\end{tiny}$ to place these dynamics within our framework.
%The offset term $\Gamma(\theta)$ has been realized through intuition from thermodynamics.

\prepar
\paragraph{Summary}
%\subsection{Discussion of Choice of D and Q}
%\label{sec:DQdiscussion}
%\postssec
%In Sec.~\ref{sec:previous}, we explicitly placed a number of previous MCMC methods within our proposed framework, specifying the diffusion matrix $\mD(\vz)$ and \vg matrix $\mQ(\vz)$ for each method.
%We now discuss the relationships between the previous methods in terms of their choices of $\mD(\vz)$ and $\mQ(\vz)$.
In our framework, SGLD and SGRLD take $\mQ(\vz)=0$ and instead stress the design of the diffusion matrix $\mD(\vz)$, with SGLD using a constant $\mD(\vz)$ and SGRLD an adaptive, $\theta$-dependent diffusion matrix to better account for the geometry of the space being explored. On the other hand, HMC takes $\mD(\vz)=0$ and focuses on the curl matrix $\mQ(\vz)$. SGHMC combines SGLD with HMC through non-zero $\mD(\theta)$ and $\mQ(\theta)$ matrices. SGNHT then extends SGHMC by taking $\mQ(\vz)$ to be state dependent. The relationships between these methods are depicted in the Supplement, which likewise contains a discussion of the tradeoffs between these two matrices.  In short, $\mD(\vz)$ can guide escaping from local modes while $\mQ(\vz)$ can enable rapid traversing of low-probability regions, especially when state adaptation is incorporated. We readily see that most of the product space $\mD(\vz) \times \mQ(\vz)$, defining the space of all possible samplers, has yet to be filled.

\pressec
\subsection{Stochastic Gradient Riemann Hamiltonian Monte Carlo}
\label{sec:SGRHMC}
\postssec
In Sec.~\ref{sec:previous}, we have shown how our framework unifies existing samplers. In this section, we now use our framework to guide the development of a new sampler. While SGHMC~\cite{SGHMC} inherits the momentum term of HMC, making it easier to traverse the space of parameters, the underlying geometry of the target distribution is still not utilized. Such information can usually be represented by the Fisher information metric~\cite{GirolamiCalderhead2011}, denoted as $\mG(\theta)$, which can be used to precondition the dynamics. For our proposed system, we consider $H(\theta, r) = U(\theta) + \frac{1}{2} r^T r$, as in HMC/SGHMC methods, and modify the $\mD(\theta,r)$ and $\mQ(\theta,r)$ of SGHMC to account for the geometry as follows:
\begin{align*}
\begin{small}
\mD(\theta,r) =
 \left(
\begin{array}{cc}
         0 & 0 \\
         0 & {\mG}(\theta)^{-1}
\end{array}
\right);
\qquad {\mQ}(\theta, r) =
 \left(
\begin{array}{cc}
         0 & - {\mG}(\theta)^{-1/2} \\
         {\mG}(\theta)^{-1/2} & 0
\end{array}
\right).
\end{small}
\end{align*}
%
%There has yet to be a stochastic gradient HMC sampler with state dependence in these matrices, and including state dependence in even one would represent a new sampler.
We refer to this algorithm as \emph{stochastic gradient Riemann Hamiltonian Monte Carlo} (\textbf{SGRHMC}).  Our theory holds for any positive definite $\mG(\theta)$, yielding a \emph{generalized SGRHMC} (\textbf{gSGRHMC}) algorithm, which can be helpful when the Fisher information metric is hard to compute.

A na\"{i}ve implementation of a state-dependent SGHMC algorithm might simply (i) precondition the HMC update, (ii) replace $\nabla U(\theta)$ by $\nabla\widetilde{U}(\theta)$, and (iii) add a state-dependent friction term on the order of the diffusion matrix to counterbalance the noise as in SGHMC, resulting in:
\begin{align}
\small{
{\rm Naive:}\left\{
\begin{array}{cl}
         \theta_{t+1} \leftarrow& \theta_t + \eps_t \mG(\theta_t)^{-1/2} r_t \\
         r_{t+1} \leftarrow& r_t
         - \eps_t \mG(\theta_t)^{-1/2} \nabla_\theta \widetilde{U}(\theta_t)
         -  \eps_t \mG(\theta_t)^{-1} r_t
         + \mathcal{N}(0, \eps_t (2 \mG(\theta_t)^{-1} - \eps_t\hat{\mB}_t) ).\\
\end{array}
\right.}
\label{eq:SGRHMCnaive}
\end{align}
However, as we show in Sec.~\ref{sec:sim}, samples from these dynamics do not converge to the desired distribution.  Indeed, this system cannot be written within our framework.  Instead, we can simply follow our framework and, as indicated by Eq.~\eqref{eq:true-update}, consider the following update rule:
\begin{align}
\small{
\left\{
\begin{array}{l}
         \theta_{t+1} \leftarrow \theta_t + \eps_t \mG(\theta_t)^{-1/2} r_t \\
         r_{t+1} \leftarrow r_t
         - \eps_t [\mG(\theta)^{-1/2} \nabla_\theta \widetilde{U}(\theta_t)
         +  \nabla_\theta \left(\mG(\theta_t)^{-1/2}\right)
         -  \mG(\theta_t)^{-1} r_t]
         + \mathcal{N}(0, \eps_t (2 \mG(\theta_t)^{-1} - \eps_t\hat{\mB}_t) ),\\
\end{array}
\right. }\label{ExSGRHMC}
\end{align}
which includes a correction term $\nabla_\theta \left({\mG}(\theta)^{-1/2}\right)$,
with $i$-th component $\sum\limits_j \dfrac{\partial}{\partial \theta_j} \left({\mG}(\theta)^{-1/2}\right)_{ij}$.
%According to Theorem~\ref{thm:stationarydist}, this should lead to the correct invariant distribution.
The practical implementation of gSGRHMC is outlined in Algorithm~\ref{alg:SGRHMC}.

\begin{algorithm}[t!]
	\label{alg:SGRHMC}
\caption{Generalized Stochastic Gradient Riemann Hamiltonian Monte Carlo}
initialize $(\theta_0,r_0)$ \\
 \For{$t=0,1,2\cdots$}{
  optionally, periodically resample momentum $r$ as $r^{(t)}\sim \mathcal{N}(0,\mI)$\\
   $\theta_{t+1} \leftarrow \theta_{t} + \epsilon_t {\mG}(\theta_{t})^{-1/2} r_{t}, \quad \Sigma_t \leftarrow \epsilon_t ( 2\mG(\theta_{t})^{-1} - \eps_t\hat{\mB}_t)$ \\
   %\hspace{-0.2in}$\begin{array}{cl}
   $r_{t+1} \leftarrow r_{t} - \epsilon_t \mG(\theta_{t})^{-1/2}\nabla_\theta \psU(\theta_{t})
   + \epsilon_t \nabla_\theta ({\mG}(\theta_{t})^{-1/2})
   - \epsilon_t {\mG}(\theta_{t})^{-1} r_{t}
   + \mathcal{N}\Big(0, \Sigma_t\Big)$\\
   %\end{array}
 }
\end{algorithm}

\presec
\section{Experiments}
\postsec
In Sec.~\ref{sec:sim}, we show that gSGRHMC can excel at rapidly exploring distributions with complex landscapes.  We then apply SGRHMC to sampling in a latent Dirichlet allocation (LDA) model on a large Wikipedia dataset in Sec.~\ref{sec:LDA}. The Supplement contains details on the specific samplers considered and the parameter settings used in these experiments.

\pressec
\subsection{Synthetic Experiments}
\label{sec:sim}
\postssec
\begin{figure}[t!]
%\vspace{-0.15in}
%\hspace*{0.4in}
%\includegraphics[height=0.13\textheight]{figure/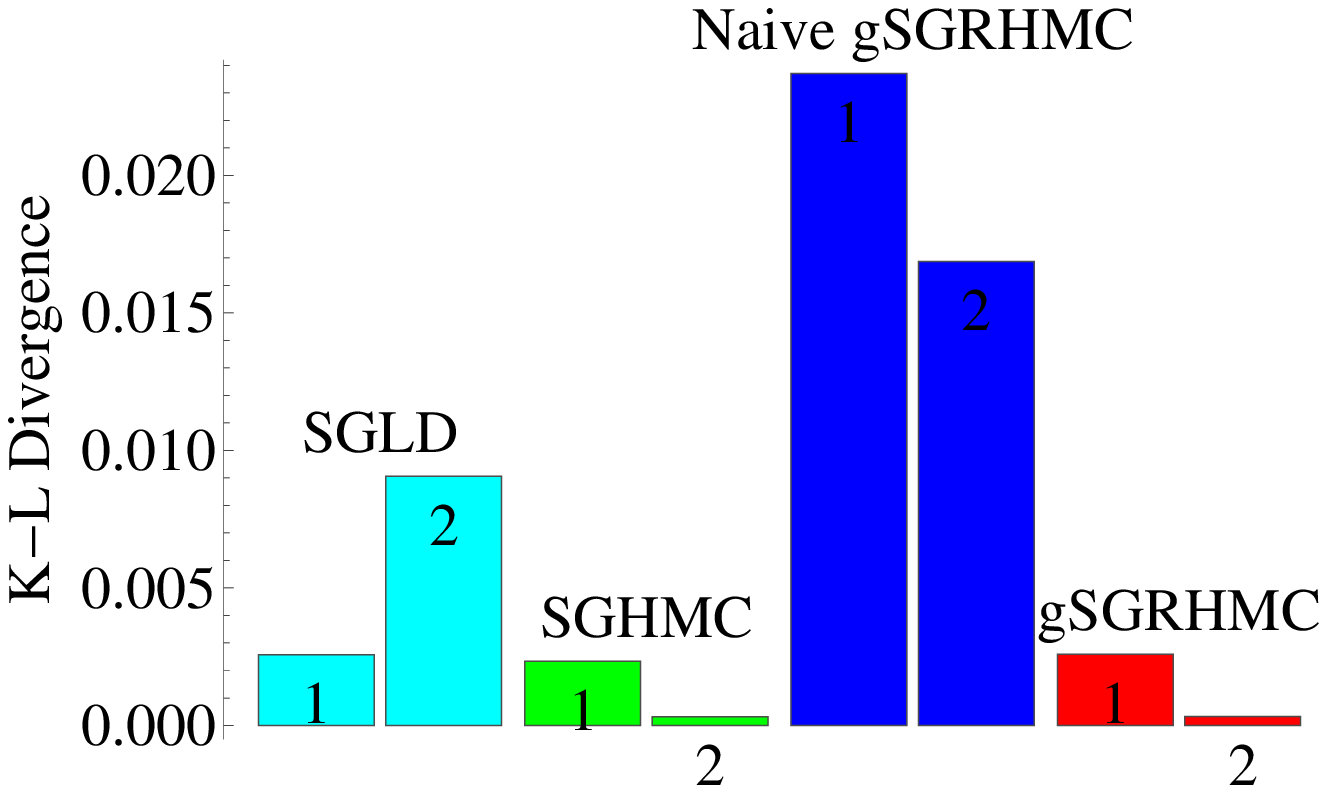}
\includegraphics[height=0.17\textheight]{./ErrorAll.eps}
%\hspace*{0.4in}
%\hfill
%\hspace*{-0.2in}
%\includegraphics[height=0.15\textheight]{figure/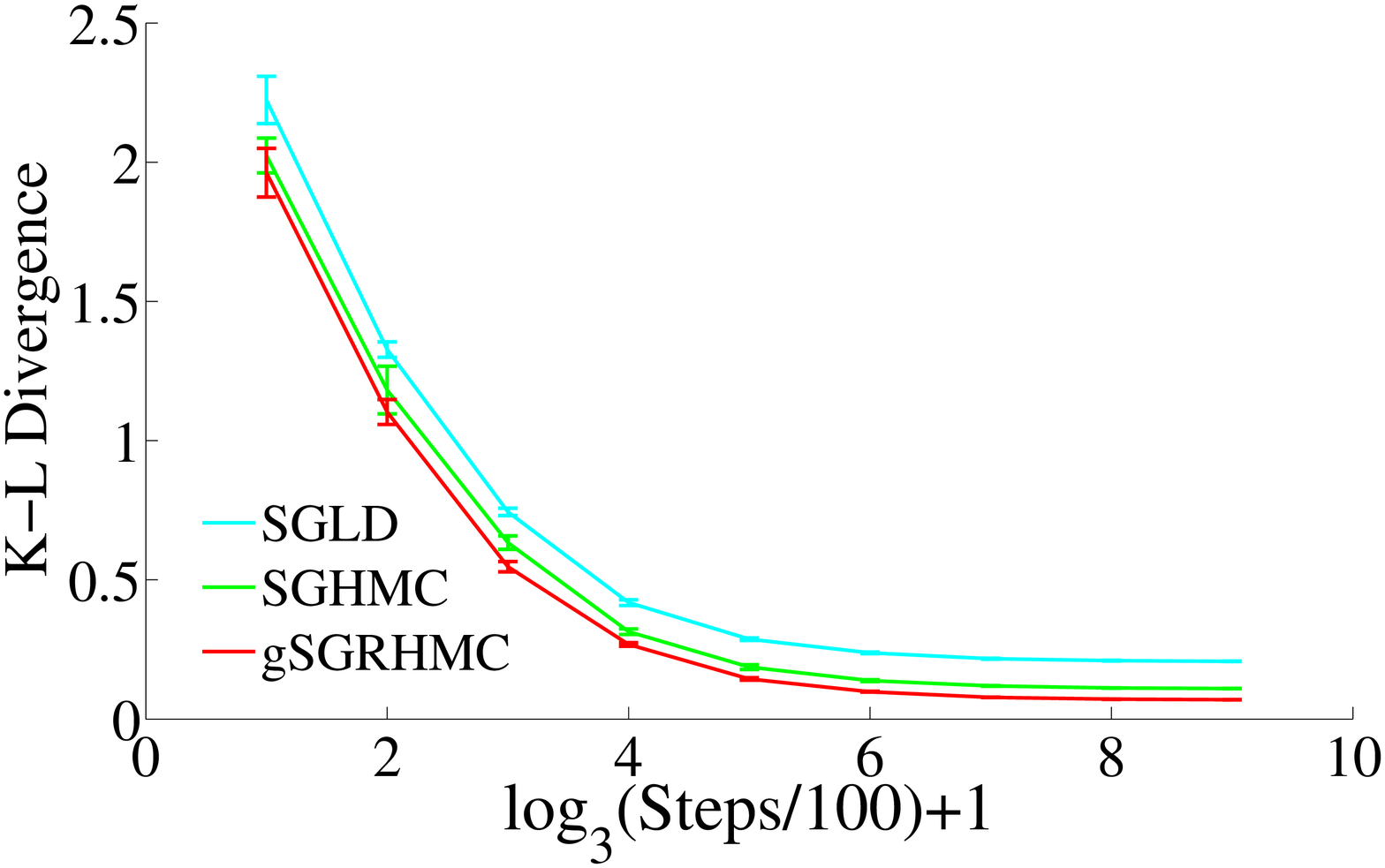}
\includegraphics[height=0.194\textheight]{./OnePeaks2D2.eps}
\hspace*{-0.5in}
\llap{\raisebox{3.2cm}{%  move next graphics to top right corner
\includegraphics[height=0.032\textheight]{./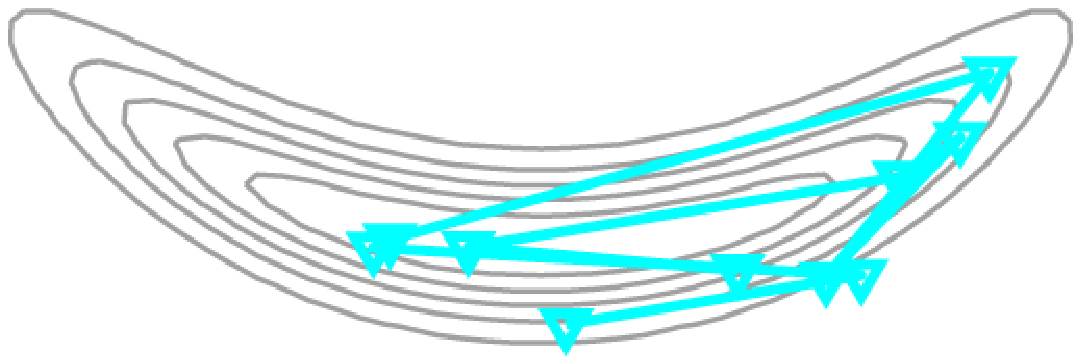}
\hspace*{0.05in}
\begin{minipage}[t][0.2\textheight][s]{0.12\textwidth}
  \includegraphics[height=0.0325\textheight]{./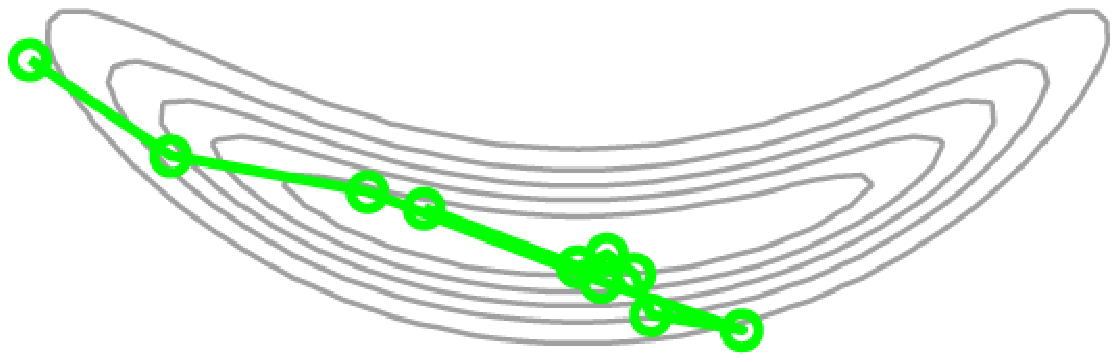}
  %\vfill
  \vspace*{-20in}
  \includegraphics[height=0.056\textheight]{./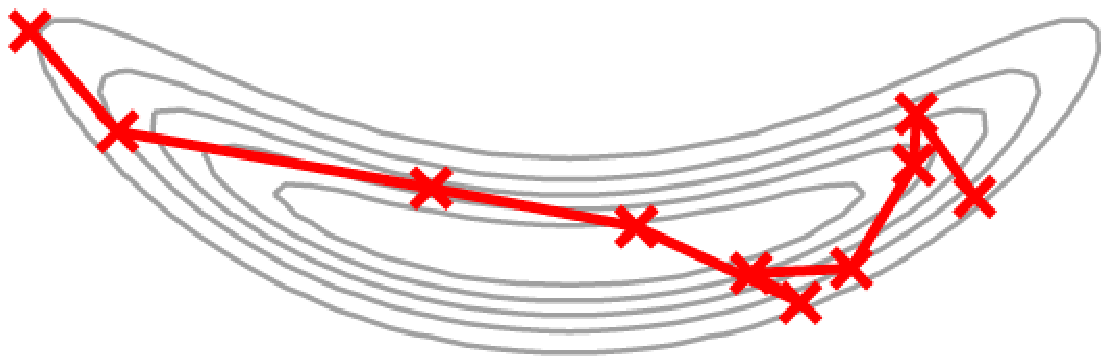}
\end{minipage}
}}
\vspace*{-0.28in}
\precap
\caption{\small \emph{Left: } For two simulated 1D distributions defined by $U(\theta) = \theta^2/2$ (\emph{one peak}) and $U(\theta) = \theta^4-2\theta^2$ (\emph{two peaks}), we compare the KL divergence of methods: \textcolor[rgb]{0.0, 1.0, 1.0}{\texttt{SGLD}}, \textcolor[rgb]{0.0, 1.0, 0.0}{\texttt{SGHMC}}, the \textcolor[rgb]{0.0, 0.0, 1.0}{\texttt{na\"{i}ve SGRHMC}} of Eq.~\eqref{eq:SGRHMCnaive}, and the \textcolor[rgb]{1.0, 0.0, 0.0}{\texttt{gSGRHMC}} of Eq.~\eqref{ExSGRHMC} relative to the true distribution in each scenario (left and right bars labeled by $1$ and $2$). \emph{Right: } For a correlated 2D distribution with $U(\theta_1,\theta_2) = \theta_1^4/10 + (4\cdot(\theta_2+1.2) - \theta_1^2)^2/2$, we see that our \textcolor[rgb]{1.0, 0.0, 0.0}{\texttt{gSGRHMC}} most rapidly explores the space relative to \textcolor[rgb]{0.0, 1.0, 0.0}{\texttt{SGHMC}} and \textcolor[rgb]{0.0, 1.0, 1.0}{\texttt{SGLD}}.  Contour plots of the distribution along with paths of the first 10 sampled points are shown for each method.}%\vspace{-0.1in}
\label{fig:sim}
\postcap
\vspace{0.05in}
\end{figure}
In this section we aim to empirically (i) validate the correctness of our recipe and (ii) assess the effectiveness of gSGRHMC.
%We specifically consider $\mG(\theta)^{-1} = D \sqrt{|\psU(\theta) + C|}$. The constant $C$ ensures that ${\psU(\theta) + C}$ is positive in most cases so that the fluctuation is indeed smaller when the probability density function is higher. Note that we define $\mG(\theta)$ in terms of $\psU(\theta)$ to avoid a costly full-data computation. We choose $D=1.5$ and $C=0.5$ in the following experiments. %The design of $\mG$ is motivated by Sec.~\ref{sec:DQdiscussion}, taking $\mQ(\theta)$ to have 2-norm growing with the level sets of the potential function can lead to faster exploration of the posterior.
In Fig.~\ref{fig:sim}(left), we consider two univariate distributions (shown in the Supplement) and compare SGLD, SGHMC, the na\"{i}ve state-adaptive SGHMC of Eq.~\eqref{eq:SGRHMCnaive}, and our proposed gSGRHMC of Eq.~\eqref{ExSGRHMC}. See the Supplement for the form of $\mG(\theta)$. As expected, the na\"{i}ve implementation does not converge to the target distribution. In contrast, the gSGRHMC algorithm obtained via our recipe indeed has the correct invariant distribution and efficiently explores the distributions.  %\ebf{Visualizations of the resulting distributions relative to the truth are shown in the Supplement.}
In the second experiment, we sample a bivariate distribution with strong correlation. The results are shown in Fig.~\ref{fig:sim}(right). The comparison between SGLD, SGHMC, and our gSGRHMC method shows that both a state-dependent preconditioner and Hamiltonian dynamics help to make the sampler more efficient than either element on its own. %gSGRHMC, which combines both element explores the space more effectively.

\begin{figure}%
\vspace{-10pt}
\hspace{-15pt}
\centering
\hspace{-2pt}
\parbox{0.4\textwidth}{
\begin{small}
\tabcolsep=0.11cm
\begin{tabular}{ l | c  c } %\hline
\ & {\bf Original LDA} & {\bf Expanded Mean}\\
Parameter $\theta$ & $\beta_{kw} = \theta_{kw}$ & $\beta_{kw}=\frac{\theta_{kw}}{\sum_w \theta_{kw}}$\\
Prior $p(\theta)$ & $p(\theta_k) = {\rm Dir}(\alpha)$ & $p(\theta_{kw}) = \Gamma(\alpha,1)$ %\\
\end{tabular}\\[24pt]
\begin{tabular}{ l | c } %\hline
{\bf Method} \quad \hspace{1.5pt} & \quad {\bf Average Runtime per 100 Docs}\\
\textcolor[rgb]{0.0, 1.0, 1.0}{\texttt{SGLD}} & 0.778s\\
\textcolor[rgb]{0.0, 1.0, 0.0}{\texttt{SGHMC}} & 0.815s\\
\textcolor[rgb]{0.0, 0.0, 1.0}{\texttt{SGRLD}} & 0.730s\\
\textcolor[rgb]{1.0, 0.0, 0.0}{\texttt{SGRHMC}} & 0.806s\\
\end{tabular}
\end{small}
}
\qquad\quad
%\hspace{\fill}
%\hspace{0.2pt}
\begin{minipage}[c]{0.5\textwidth}%
\hspace{\fill}
    \includegraphics[width=3in]{./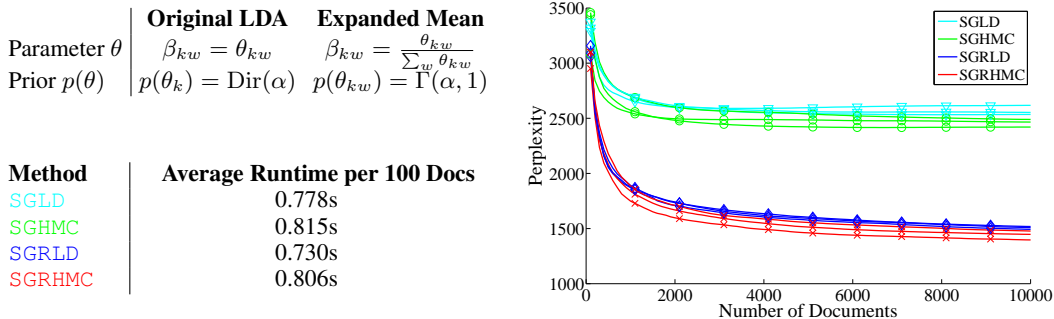}
\end{minipage}
\hspace{-8pt}
\precap\vspace{0.03in}
\caption{\small \emph{Upper Left:} Expanded mean parameterization of the LDA model.
\emph{Lower Left:} Average runtime per 100 Wikipedia entries for all methods.
\emph{Right:} Perplexity versus number of Wikipedia entries processed.}
\label{fig:lda}
\postcap\vspace{0.08in}
\end{figure}

\pressec
\subsection{Online Latent Dirichlet Allocation}
\label{sec:LDA}
\postssec
%\vspace{-0.005in}
We also applied SGRHMC (with ${\mG}(\theta) = {\rm diag}(\theta)^{-1}$, the Fisher information metric) to an \emph{online} latent Dirichlet allocation (LDA)~\cite{Blei:2003:LDA} analysis of topics present in Wikipedia entries. In LDA, each topic is associated with a distribution over words, with $\beta_{kw}$ the probability of word $w$ under topic $k$.  Each document is comprised of a mixture of topics, with $\pi^{(d)}_k$ the probability of topic $k$ in document $d$. Documents are generated by first selecting a topic $z_j^{(d)} \sim \pi^{(d)}$ for the $j$th word and then drawing the specific word from the topic as $x_j^{(d)} \sim \beta_{z_j^{(d)}}$.  Typically, $\pi^{(d)}$ and $\beta_k$ are given Dirichlet priors.

The goal of our analysis here is inference of the corpus-wide topic distributions $\beta_k$. Since the Wikipedia dataset is large and continually growing with new articles, it is not practical to carry out this task over the whole dataset. Instead, we scrape the corpus from Wikipedia in a streaming manner and sample parameters based on minibatches of data. Following the approach in~\cite{SGRLD}, we first analytically marginalize the document distributions $\pi^{(d)}$ and, to resolve the boundary issue posed by the Dirichlet posterior of $\beta_k$ defined on the probability simplex, use an expanded mean parameterization shown in Figure~\ref{fig:lda}(upper left).  Under this parameterization, we then compute $\nabla \log p(\theta | \rvx)$ and, in our implementation, use boundary reflection to ensure the positivity of parameters $\theta_{kw}$. The necessary expectation over word-specific topic indicators $z_j^{(d)}$ is approximated using Gibbs sampling separately on each document, as in~\cite{SGRLD}.  The Supplement contains further details.

For all the methods, we report results of three random runs.
%The step-size parameter in all methods is selected via a grid search and the result with lowest perplexity is reported.
%We consider the expanded mean parametrization of LDA as used in~\cite{SGRLD}.
When sampling distributions with mass concentrated over small regions, as in this application, it is important to incorporate geometric information via a Riemannian sampler~\cite{SGRLD}.  The results in Fig.~\ref{fig:lda}(right) indeed demonstrate the importance of Riemannian variants of the stochastic gradient samplers.  However, there also appears to be some benefits gained from the incorporation of the HMC term for both the Riemmannian and non-Reimannian samplers.
The average runtime for the different methods are similar (see Fig.~\ref{fig:lda}(lower left)) since the main computational bottleneck is the gradient evaluation.
Overall, this application serves as an important example of where our newly proposed sampler can have impact.

%\vspace{-0.01in}
\pressec
\section{Conclusion}
\postssec
%\vspace{-0.04in}
We presented a general recipe for devising MCMC samplers based on continuous Markov processes.  Our framework constructs an SDE specified by two matrices, a positive semidefinite $\mD(\vz)$ and a skew-symmetric $\mQ(\vz)$.  We prove that for any $\mD(\vz)$ and $\mQ(\vz)$, we can devise a continuous Markov process with a specified stationary distribution.  We also prove that for any continuous Markov process with the target stationary distribution, there exists a $\mD(\vz)$ and $\mQ(\vz)$ that cast the process in our framework.  Our recipe is particularly useful in the more challenging case of devising stochastic gradient MCMC samplers. We demonstrate the utility of our recipe in ``reinventing'' previous stochastic gradient MCMC samplers, and in proposing our SGRHMC method.  The efficiency and scalability of the SGRHMC method was shown on simulated data and a streaming Wikipedia analysis.

\presec
%\vspace{-0.01in}
\subsubsection*{Acknowledgments}
%\vspace{-0.02in}
\postsec
{\small This work was supported in part by ONR Grant N00014-10-1-0746, NSF CAREER Award IIS-1350133, and the TerraSwarm Research Center sponsored by MARCO and DARPA. We also thank Mr. Lei Wu for helping with the proof of Theorem~\ref{thm:complete} and Professors Ping Ao and Hong Qian for many discussions.}
% DARPA Grant FA9550-12-1-0406 negotiated by AFOSR,

\bibliographystyle{plain}
\bibliography{CompleteSampling}

\begin{thebibliography}{10}

\bibitem{Ahn:2012:SGFS}
S.~Ahn, A.~Korattikara, and M.~Welling.
\newblock {Bayesian posterior sampling via stochastic gradient {F}isher
  scoring}.
\newblock In {\em Proceedings of the 29th International Conference on Machine
  Learning (ICML'12)}, 2012.

\bibitem{AhnShaWel2014}
S.~Ahn, B.~Shahbaba, and M.~Welling.
\newblock Distributed stochastic gradient {MCMC}.
\newblock In {\em Proceeding of 31st International Conference on Machine
  Learning (ICML'14)}, 2014.

\bibitem{Bardenet}
R.~Bardenet, A.~Doucet, and C.~Holmes.
\newblock Towards scaling up {M}arkov chain {M}onte {C}arlo: {A}n adaptive
  subsampling approach.
\newblock In {\em Proceedings of the 30th International Conference on Machine
  Learning (ICML'14)}, 2014.

\bibitem{Betancourt:ICML2015}
M.~Betancourt.
\newblock The fundamental incompatibility of scalable {H}amiltonian {M}onte
  {C}arlo and naive data subsampling.
\newblock In {\em Proceedings of the 31th International Conference on Machine
  Learning (ICML'15)}, 2015.

\bibitem{Blei:2003:LDA}
D.M. Blei, A.Y. Ng, and M.I. Jordan.
\newblock Latent dirichlet allocation.
\newblock {\em Journal of Machine Learning Research}, 3:993--1022, March 2003.

\bibitem{SGHMC}
T.~Chen, E.B. Fox, and C.~Guestrin.
\newblock Stochastic gradient {H}amiltonian {M}onte {C}arlo.
\newblock In {\em Proceeding of 31st International Conference on Machine
  Learning (ICML'14)}, 2014.

\bibitem{SGNHT}
N.~Ding, Y.~Fang, R.~Babbush, C.~Chen, R.D. Skeel, and H.~Neven.
\newblock Bayesian sampling using stochastic gradient thermostats.
\newblock In {\em Advances in Neural Information Processing Systems 27
  (NIPS'14)}. 2014.

\bibitem{Duane:1987HMC}
S.~Duane, A.D. Kennedy, B.J. Pendleton, and D.~Roweth.
\newblock Hybrid {M}onte {C}arlo.
\newblock {\em Physics Letters B}, 195(2):216 -- 222, 1987.

\bibitem{ChapmanKolmogorov}
W.~Feller.
\newblock {\em Introduction to Probability Theory and its Applications}.
\newblock John Wiley \& Sons, 1950.

\bibitem{GirolamiCalderhead2011}
M.~Girolami and B.~Calderhead.
\newblock Riemann manifold {L}angevin and {H}amiltonian {M}onte {C}arlo
  methods.
\newblock {\em Journal of the Royal Statistical Society Series B},
  73(2):123--214, 03 2011.

\bibitem{Korattikara:2013austerity}
A.~Korattikara, Y.~Chen, and M.~Welling.
\newblock Austerity in {MCMC} land: {Cutting} the {Metropolis-Hastings} budget.
\newblock In {\em Proceedings of the 30th International Conference on Machine
  Learning (ICML'14)}, 2014.

\bibitem{NealHMC}
R.M. Neal.
\newblock {MCMC} using {Hamiltonian} dynamics.
\newblock {\em Handbook of {M}arkov Chain {M}onte {C}arlo}, 54:113--162, 2010.

\bibitem{SGRLD}
S.~Patterson and Y.W. Teh.
\newblock Stochastic gradient {R}iemannian {L}angevin dynamics on the
  probability simplex.
\newblock In {\em Advances in Neural Information Processing Systems 26
  (NIPS'13)}. 2013.

\bibitem{FokkerPlanck}
H.~Risken and T.~Frank.
\newblock {\em The {Fokker-Planck} Equation: Methods of Solutions and
  Applications}.
\newblock Springer, 1996.

\bibitem{SGD}
H.~Robbins and S.~Monro.
\newblock A stochastic approximation method.
\newblock {\em The Annals of Mathematical Statistics}, 22(3):400--407, 09 1951.

\bibitem{JShi}
J.~Shi, T.~Chen, R.~Yuan, B.~Yuan, and P.~Ao.
\newblock Relation of a new interpretation of stochastic differential equations
  to {I}t\^o process.
\newblock {\em Journal of Statistical Physics}, 148(3):579--590, 2012.

\bibitem{SGLD}
M.~Welling and Y.W. Teh.
\newblock Bayesian learning via stochastic gradient {L}angevin dynamics.
\newblock In {\em Proceedings of the 28th International Conference on Machine
  Learning (ICML'11)}, pages 681--688, June 2011.

\bibitem{MALA}
T.~Xifara, C.~Sherlock, S.~Livingstone, S.~Byrne, and M.~Girolami.
\newblock Langevin diffusions and the {M}etropolis-adjusted {L}angevin
  algorithm.
\newblock {\em Statistics \& Probability Letters}, 91:14--19, 2014.

\bibitem{PAo}
L.~Yin and P.~Ao.
\newblock Existence and construction of dynamical potential in nonequilibrium
  processes without detailed balance.
\newblock {\em Journal of Physics A: Mathematical and General}, 39(27):8593,
  2006.

\bibitem{Liouville}
R.~Zwanzig.
\newblock {\em Nonequilibrium Statistical Mechanics}.
\newblock Oxford University Press, 2001.

\end{thebibliography}
\newpage
\appendix
\section*{Supplementary Material: A Complete Recipe for Stochastic Gradient MCMC}
\renewcommand\theequation{S.\arabic{equation}}
\setcounter{equation}{0}
\renewcommand\thefigure{S.\arabic{figure}}
\setcounter{equation}{0}
\renewcommand\thetable{S.\arabic{table}}
\setcounter{equation}{0}

\section{Proof of Stationary Distribution}
In this section, we provide a proof for Theorem~\ref{thm:stationarydist}. To prove the theorem, we first show when ${\vf}(\vz)$ satisfies Eq.~\eqref{CorrectDrift} in the main paper, then the following Fokker-Planck equation of the dynamics:
\begin{align*}
	\partial_t p (\vz, t)
	=& - \sum\limits_i \dfrac{\partial}{\partial \vz_i} \big({\vf}_i(\vz) p(\vz,t) \big)
	+ \sum\limits_{i,j} \dfrac{\partial^2}{\partial \vz_i \partial \vz_j}
	\big({\mD}_{ij}(\vz) p(\vz,t) \big),
\end{align*}
is equivalent to the following compact form~\cite{PAo,JShi}:
\begin{align}
	\partial_t p (\vz, t)
	=& \nabla^T \cdot
	\Big(  \left[\mD(\vz) + \mQ(\vz)\right]\left[p(\vz, t) \nabla H(\vz) + \nabla p(\vz, t) \right]\Big). \label{eq:app:Compact}
\end{align}
\begin{proof}
The proof is a re-writing of Eq.~\eqref{eq:app:Compact}:
\begin{align*}
	\partial_t p (\vz, t)
	=& \nabla^T \cdot
	\Big(  \left[\mD(\vz) + \mQ(\vz)\right]\left[p(\vz, t) \nabla H(\vz) + \nabla p(\vz, t) \right]\Big)\\
    =&\sum_{i=1} \dfrac{\partial}{\partial \vz_i} \sum_j \left[\mD_{ij}(\vz) + \mQ_{ij}(\vz)\right]\left[p(\vz, t) \dfrac{\partial}{\partial \vz_j} H(\vz) + \dfrac{\partial}{\partial \vz_j}  p(\vz, t) \right]\\
    =&\sum_{i} \dfrac{\partial}{\partial \vz_i} \left\{\sum_j \left[\mD_{ij}(\vz) + \mQ_{ij}(\vz)\right] p(\vz, t) \dfrac{\partial}{\partial \vz_j} H(\vz)\right\}
     + \sum_{i}\dfrac{\partial}{\partial \vz_i} \left[\mD_{ij}(\vz) + \mQ_{ij}(\vz)\right]\sum_j\dfrac{\partial}{\partial \vz_j}  p(\vz, t) .
\end{align*}
We can further decompose the second term as follows
\begin{align*}
\sum_{i}\dfrac{\partial}{\partial \vz_i} \mD_{ij}(\vz) \sum_j\dfrac{\partial}{\partial \vz_j}  p(\vz, t)
 =& \sum_{i j}\dfrac{\partial}{\partial \vz_i} \dfrac{\partial}{\partial \vz_j}[ \mD_{ij}(\vz)  p(\vz, t)] - \sum_{i}\dfrac{\partial}{\partial \vz_i}\left\{ p(\vz, t) \left[ \sum_j \dfrac{\partial}{\partial \vz_j}\mD_{ij}(\vz) \right]\right\}\\
\sum_{i}\dfrac{\partial}{\partial \vz_i} \mQ_{ij}(\vz) \sum_j\dfrac{\partial}{\partial \vz_j}  p(\vz, t)
 =& - \sum_{i}\dfrac{\partial}{\partial \vz_i}\left\{ p(\vz, t) \left[ \sum_j \dfrac{\partial}{\partial \vz_j}\mQ_{ij}(\vz) \right]\right\} .
\end{align*}
The second equality follows because $\sum_{i j}\dfrac{\partial}{\partial \vz_i} \dfrac{\partial}{\partial \vz_j}[ \mQ_{ij}(\vz)  p(\vz, t)] =0$ due to anti-symmetry of $\mQ$.
Putting these back into the formula, we get
\begin{align*}
	\partial_t p (\vz, t)
    =&\sum_{i} \dfrac{\partial}{\partial \vz_i} \left\{ \left[\sum_j \left[\mD_{ij}(\vz) + \mQ_{ij}(\vz)\right] \dfrac{\partial}{\partial \vz_j} H(\vz) -
    \sum_j [\dfrac{\partial}{\partial \vz_j}\mD_{ij}(\vz) + \dfrac{\partial}{\partial \vz_j}\mQ_{ij}(\vz)] \right]p(\vz, t)\right\} \\
    & + \sum_{i j}\dfrac{\partial^2}{\partial \vz_i\partial \vz_j}  [ \mD_{ij}(\vz)  p(\vz, t)] \\
    =& - \sum\limits_i \dfrac{\partial}{\partial \vz_i} \big({\vf}_i(\vz) p(\vz,t) \big)
	+ \sum\limits_{i,j} \dfrac{\partial^2}{\partial \vz_i \partial \vz_j}
	\big({\mD}_{ij}(\vz) p(\vz,t) \big) .
\end{align*}
\end{proof}
We can then verify that $p(\vz \mid \mathcal{S}) \propto e^{ - H(\vz)} $ is invariant under the compact form by calculating
$$\left[p(\vz, t) \nabla H(\vz) + \nabla p(\vz, t) \right]\propto \left[ e^{ - H(\vz)} \nabla H(\vz) + \nabla e^{ - H(\vz)} \right] = 0.$$
This completes the proof of Theorem~\ref{thm:stationarydist}.

The above proof follows directly from~\cite{JShi} and is provided here for readers' convenience.
\section{Proof of Completeness}
In this section, we provide a constructive proof for Theorem~\ref{thm:complete}, the existence of ${\mQ}(\vz)$.

The proof is first outlined as follows:
\begin{itemize}
\item
%We first rewrite Eq.~\eqref{CorrectDrift} and notice that finding matrix $\mathbf{Q}(\theta)$ is equivalent to finding the matrix $\mathbf{Q}(\theta) p^s(\theta)$ with the divergence of its columns equal to a divergence-free vector.
We first rewrite Eq.~\eqref{CorrectDrift} in the main paper and notice that finding matrix $\mathbf{Q}(\vz)$ is equivalent to finding the matrix $\mathbf{Q}(\vz) p^s(\vz)$ such that
\begin{align}
\sum\limits_{j}
\dfrac{\partial}{\partial \vz_j} \Big({\mQ}_{ij}(\vz) p^s(\vz)\Big)
= {\vf}_i(\vz) p^s(\vz)
- \sum\limits_{j}
\dfrac{\partial}{\partial \vz_j} \Big({\mD}_{ij}(\vz) p^s(\vz)\Big), \nonumber
\end{align}
where the right hand side is a divergence-free vector.

\item
We transform the above equation and its constraint into the frequency domain and obtain a set of linear equations.
%We are brought to realize that the Fourier transform of $\mathbf{Q}(\theta) p^s(\theta)$ is a skew-symmetric projection matrix from the frequency vector to another orthogonal vector.
%To solve for $\mathbf{Q}(\vz) p^s(\vz)$, we can transform the set of linear differential equations into linear algebra equations through Fourier transform of the parameter space of $\vz$ into the frequency space of ${\vk}$.
%In the frequency space, $\mathbf{\hat{Q}}({\vk})$, the Fourier transform of $\mathbf{Q}(\vz) p^s(\vz)$, is a skew-symmetric projection matrix from the frequency vector ${\vk}$ to an orthogonal vector, ${\bf \hat{F}}({\vk})$, the Fourier transform of ${\mF}(\vz)$.

\item
Then we construct a solution to the linear equations and use inverse Fourier transform to obtain $\mathbf{Q}(\vz)$.

\end{itemize}

The complete procedure is:
\begin{proof}
Multiplying $p^s(\vz)$ on both sides of Eq. (\ref{CorrectDrift}) in the main paper, and noting that:
\begin{align}
p^s (\vz) = \exp\left( -H(\vz)\right),
\end{align}
we arrive at:
\begin{align}
{\vf}_i(\vz) p^s(\vz)
= \sum\limits_{j}
\dfrac{\partial}{\partial \vz_j} \Big(\big({\mD}_{ij}(\vz)+{\mQ}_{ij}(\vz)\big) p^s(\vz)\Big).
\end{align}
The equation for ${\mQ}_{ij}(\vz)$ can now be written as:
\begin{align}
\sum\limits_{j}
\dfrac{\partial}{\partial \vz_j} \Big({\mQ}_{ij}(\vz) p^s(\vz)\Big)
=
{\vf}_i(\vz) p^s(\vz)
- \sum\limits_{j}
\dfrac{\partial}{\partial \vz_j} \Big({\mD}_{ij}(\vz) p^s(\vz)\Big)
. \label{EqQ}
\end{align}

Recall that the Fokker-Planck equation for the stochastic process, Eq. (\ref{SDE1}), is:
\begin{align}
\dfrac{\partial p(\vz,t)}{\partial t}
&= -\nabla^T \cdot \big( {\vf}(\vz) p(\vz,t) \big) +  \nabla^2: \big( {\mD}(\vz) p(\vz,t) \big) \nonumber\\
&= - \sum\limits_i \dfrac{\partial}{\partial \vz_i}
\left\{{\vf}_i(\vz) p(\vz,t)
- \sum\limits_{j} \dfrac{\partial}{\partial \vz_j}
\Big({\mD}_{ij}(\vz) p(\vz,t) \Big) \right\}. \label{Ito}
\end{align}
We can immediately observe that the right hand side of Eq. (\ref{EqQ}) has a divergenceless property by substituting the stationary probability density function $p^s(\vz)$ into Eq. (\ref{Ito}):
\begin{align}
\sum\limits_i \dfrac{\partial}{\partial \vz_i}
\left\{{\vf}_i(\vz) p^s(\vz)
- \sum\limits_{j} \dfrac{\partial}{\partial \vz_j}
\Big({\mD}_{ij}(\vz) p^s(\vz) \Big) \right\}
=0. \label{divless}
\end{align}

The nice forms of Eqs. (\ref{EqQ}) and (\ref{divless}) imply that the questions can be transformed into a linear algebra problem once we apply a Fourier transform to them.
Denote the Fourier transform of ${\mQ}(\vz) p^s(\vz)$ as $\hat{{\mQ}}({\vk})$;
and Fourier transform of ${\vf}_i(\vz) p^s(\vz)
- \sum\limits_{j}
\dfrac{\partial}{\partial \vz_j} \Big({\mD}_{ij}(\vz) p^s(\vz)\Big)$ as $\hat{{\mF}}_i({\vk})$,
where ${\vk}=({\vk}_1,\cdots,{\vk}_n)^T$ is the set of the spectral variables.
That is:
\begin{align}
&\hat{{\mQ}}_{ij}({\vk})
= \int_{\mathcal{D}} {\mQ}_{ij}(\vz) p^s(\vz) e^{-2\pi {\rm i}\ {\vk}^T \vz} \rd \vz ; \nonumber\\
&\hat{{\mF}}_i({\vk})
= \int_{\mathcal{D}}
\left({\vf}_i(\vz) p^s(\vz)
- \sum\limits_{j}
\dfrac{\partial}{\partial \vz_j} \Big({\mD}_{ij}(\vz) p^s(\vz)\Big)\right)
e^{-2\pi {\rm i}\ {\vk}^T \vz} \rd \vz. \nonumber
\end{align}
Then, $\dfrac{\partial}{\partial \vz_j} \Big({\mQ}_{ij}(\vz) p^s(\vz)\Big)$ is transformed to $2\pi {\rm i}\ \hat{{\mQ}}_{ij} {\vk}_j $, and Eq. (\ref{EqQ}) becomes the following equivalent form in Fourier space:
\begin{align}
\left\{
\begin{array}{l}
            2\pi {\rm i}\ \hat{{\mQ}} {\vk} = \hat{{\mF}} \\
            {\vk}^T \hat{{\mF}} = 0.
        \end{array}
\right.
\end{align}

Hence, it is clear that matrix $\hat{{\mQ}}$ must be a skew-symmetric projection matrix from the span of ${\vk}$ to the span of $\hat{{\mF}}$,
where ${\vk}$ and $\hat{{\mF}}$ are always orthogonal to each other.
We thereby construct $\hat{{\mQ}}$ as combination of two rank $1$ projection matrices:
\begin{align}
    \hat{{\mQ}} = (2\pi {\rm i})^{-1} \dfrac{\hat{{\mF}} {\vk}^T}{{\vk}^T {\vk}}
    - (2\pi {\rm i})^{-1} \dfrac{{\vk} \hat{{\mF}}^T}{{\vk}^T {\vk}} .
\end{align}
We arrive at the final result that matrix ${\mQ}(\vz)$ is equal to $p^s(\vz)^{-1}$ times the inverse Fourier transform of $\hat{{\mQ}}({\vk})$:
\begin{align}
{\mQ}_{ij}(\vz)
= p^s(\vz)^{-1}
{\displaystyle \int_{\mathcal{D}} \dfrac{{\vk}_j\hat{{\mF}}_i({\vk})-{\vk}_i\hat{{\mF}}_j({\vk})}{(2\pi {\rm i})\cdot \sum\limits_l {\vk}_l^2}
e^{2\pi {\rm i} \sum\limits_l {\vk}_l \rvx_l} \rd {\vk} }. \label{Soln}
\end{align}
Thus, if $\left({\vf}_i(\vz) p^s(\vz)
- \sum\limits_{j}
\dfrac{\partial}{\partial \vz_j} \Big({\mD}_{ij}(\vz) p^s(\vz)\Big)\right)$ belongs to the space of $L^1$, then any continuous time Markov process, Eq. (\ref{SDE1}), can be turned into this new formulation.
\end{proof}

\begin{remark}
Entries in the skew-symmetric projector ${\mQ}_{ij}(\vz)$ constructed here are real.
%\tqc{cut this? Because Fourier transform of $\dfrac{ {\vk}_i}{(2\pi {\rm i})\cdot \sum\limits_l {\vk}_l^2}$ is.}

Denote ${\va}_i^2=\sum\limits_{l\neq i} {\vk}_l^2$, then the inverse Fourier transform of $\dfrac{ {\vk}_i}{(2\pi {\rm i})\cdot \sum\limits_l {\vk}_l^2}$ along the partial variable ${\vk}_i$ is equal to:
\[
{\vg}_i(\vz) = - \dfrac12 e^{-2 \pi {\va}_i \vz_i} H[\vz_i] + \dfrac12 e^{2 \pi {\va}_i \vz_i} H[ -\vz_i],
\]
where $H[x]$ is the Heaviside function.
Because ${\vg}_i(\vz)$ is an even function in $k_l$, $l \neq i$, its total inverse Fourier transform is real.

Therefore, the inverse Fourier transform of $\dfrac{{\vk}_i\hat{{\mF}}_j({\vk})}{(2\pi {\rm i})\cdot \sum\limits_l {\vk}_l^2}$ is the convolution of two real functions.
\end{remark}

\section{2-D Case as a Simple Intuitive Example of the Construction}
For 2-dimensional systems, we have:
\begin{align}
    {\vk}_1\hat{{\mF}}_1({\vk}) + {\vk}_2\hat{{\mF}}_2({\vk}) = 0,
\end{align}
and hence Eq. (\ref{Soln}) has a simple form:
\begin{align}
{\mQ}_{21}(\vz_1,\vz_2) &= -{\mQ}_{12}(\vz_1,\vz_2) \nonumber\\
&= p^s(\vz_1,\vz_2)^{-1}
\int_{\vz_2^0}^{\vz_2} {\vf}_1(\vz_1,s) p^s(\vz_1,s) \rd s \nonumber\\
&- \int_{\vz_2^0}^{\vz_2} \dfrac{\partial}{\partial \vz_1} \Big({\mD}_{11}(\vz_1,s) p^s(\vz_1,s)\Big)
- \dfrac{\partial}{\partial s} \Big({\mD}_{12}(\vz_1,s) p^s(\vz_1,s)\Big) \rd s \nonumber\\
&= - p_s(\vz_1,\vz_2)^{-1}
\int_{\vz_1^0}^{\vz_1} {\vf}_2(s,\vz_2) p^s(s,\vz_2) \rd s \nonumber\\
&+ \int_{\vz_1^0}^{\vz_1} \dfrac{\partial}{\partial s} \Big({\mD}_{21}(s,\vz_2) p^s(s,\vz_2)\Big)
+ \dfrac{\partial}{\partial \vz_2} \Big({\mD}_{22}(s,\vz_2) p^s(s,\vz_2)\Big) \rd s.
\end{align}

\section{Previous MCMC Algorithms in the Form of Continuous Markov Processes as Elements in the Current Recipe}
\label{supp:Previous}
This section parallels that of Sec.~\ref{sec:previous} of the main paper, but in terms of the continuous dynamics underlying the samplers.  This allows us to rapidly draw connections with our SDE framework of Sec.~\ref{sec:SDE}.  Fig.~\ref{fig:Compare} provides a cartoon visualization of the portion of the product space $\mD(\vz) \times \mQ(\vz)$ already covered by past methods, after casting these methods in our framework below. Our proposed gSGRHMC method covers a portion of this space previously not explored.
%. It provides the correspondence of our framework and previous methods under the context of continuous dynamics described by stochastic differential equations.

\begin{figure}[h]
\begin{center}
	\begin{tabular}{c}
        \includegraphics[width=0.4\columnwidth]{./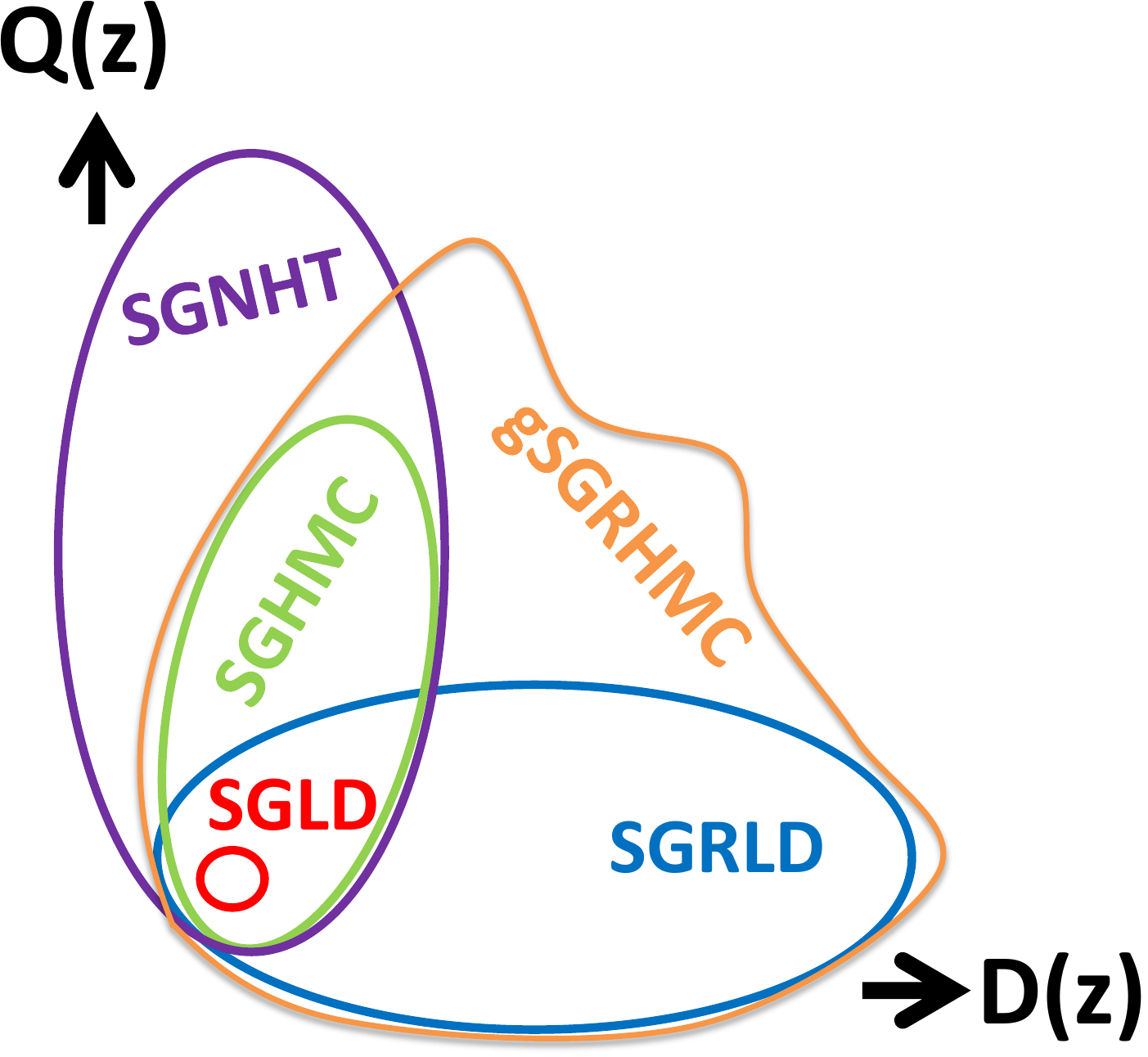}
	\end{tabular}
\end{center}
\caption{Cartoon of how previous methods explore the space of possible $\mD(\vz)$ and $\mQ(\vz)$ matrices, along with our proposed gSGRHMC method of Sec.~\ref{sec:SGRHMC}.
}
\label{fig:Compare}
\end{figure}

\paragraph{Hamiltonian Monte Carlo (HMC)}
The continuous dynamics underlying Eq.~\eqref{eq:HMC} in the main paper are
\begin{align}
\left\{
\begin{array}{l}
      \rd \theta = {\mM}^{-1} r \rd t \\
      \rd r = - \nabla U(\theta) \rd t .
        \end{array}
\right. \label{eq:HMCcont}
\end{align}
Again, we see Eq.~\eqref{eq:HMCcont} is a special case of our proposed framework with $\vz = (\theta, r)$,
$H(\theta, r) =  U(\theta) + \frac{1}{2} r^T M^{-1} r$,  ${\mQ}(\theta, r) =
\left(
\begin{array}{ll}
         0 & -I \\
         I & 0
\end{array}
\right)$ and $\mD(\theta, r) = \vzero$. %Theorem~\ref{thm:stationarydist} immediately implies that $p^s(\theta, r)\propto\exp(-H(\theta, r))$ is invariant under the dynamics, which is one of the key property that leads to correctness of HMC.

\paragraph{Stochastic Gradient Hamiltonian Monte Carlo (SGHMC)}
%As mentioned in the beginning of the section, $U(\theta)$ is too computationally intensive to compute, and we usually relies on a noised gradient $\psU(\theta)$ instead. We further make use of central limit theorem, to assume the noise part of gradient can be expressed explicitly as $\nabla \psU(\theta) = \nabla U(\theta ) +\mathcal{N}(0, V(\theta))$.
As described in~\cite{SGHMC}, replacing $\nabla U(\theta)$ by the stochastic gradient $\nabla \psU(\theta)$ in the $\epsilon$-discretized HMC system of Eq.~\eqref{eq:HMC} (resulting in Eq.~\eqref{eq:NaiveSGHMC}) has a continuous-time representation as:
\begin{align}
{\rm Naive:} \left\{
\begin{array}{l}
      \rd \theta = {\mM}^{-1} r \rd t \\
      \rd r = - \nabla U(\theta) \rd t + \sqrt{\eps \mV(\theta)} \rd \mW \approx -\nabla \widetilde{U}(\theta)\rd t.
        \end{array}
\right. \label{eq:NaiveSGHMCcont}
\end{align}
Analogously to Sec.~\ref{sec:previous}, these dynamics do not fit into our framework.  Instead, in our framework we see that the noise term $\sqrt{2\mD(\vz)} \rd \mW$ is paired with a $\mD(\vz) \nabla H(\vz)$ term, hinting that such a term must be added to the dynamics of Eq.~\eqref{eq:NaiveSGHMCcont}. Here, ${\mD}(\theta, r) =
\left(
\begin{array}{ll}
         0 & 0 \\
         0 & \eps \mV
\end{array}
\right)$, which means we need to add a term of the form $\mD(\vz) \nabla H(\vz) =  \eps \mV \nabla_r H(\theta, r)=\eps \mV\mM^{-1}r$. Interestingly, this is the correction strategy proposed in~\cite{SGHMC}, but through a physical interpretation of the dynamics.  In particular, the term $\eps \mV\mM^{-1}r$ (or, generically, $\mC\mM^{-1}r$) has an interpretation as a friction term and leads to second order Langevin dynamics:
\begin{align}
\left\{
\begin{array}{l}
      \rd \theta = {\mM}^{-1} r \rd t \\
      \rd r = - \nabla U(\theta) \rd t - {\mC}{\mM}^{-1} r \rd t + \sqrt{2\mC - \eps \mV(\theta)} \rd \mW + \sqrt{\eps \mV(\theta)} \rd \mW.
        \end{array}
\right. \label{eq:SGHMCcont}
\end{align}
This method now fits into our framework with $H(\theta,r)$ and $\mQ(\theta,r)$ as in HMC, but here with
${\mD}(\theta, r) =
\left(
\begin{array}{ll}
         0 & 0 \\
         0 & C
\end{array}
\right)$.

\paragraph{Stochastic Gradient Langevin Dynamics (SGLD)} SGLD~\cite{SGLD} proposes to use the following first order (no momentum) Langevin dynamics to
generate samples
\begin{equation}
\rd \theta = -\mD \nabla U(\theta) \rd t + \sqrt{2 \mD} \ \rd  \mW .
\end{equation}
This algorithm corresponds to taking $\vz=\theta$ with $H(\theta) = U(\theta)$, ${\mD}(\theta) = \mD$, ${\mQ}(\theta) = 0$.
As in the case of SGHMC, the variance of the stochastic gradient can be subtracted from the sampler injected noise $\sqrt{2\mD}\mW$ to make the finite stepsize simulation more accurate. This variant of SGLD leads to the stochastic gradient Fisher scoring algorithm~\cite{Ahn:2012:SGFS}.

\paragraph{Stochastic Gradient Riemannian Langevin Dynamics (SGRLD)} SGLD can be generalized to use an adaptive diffusion matrix ${\mD}(\theta)$. Specifically, it is interesting to take ${\mD}(\theta)= \mG^{-1}(\theta)$, where $\mG(\theta)$ is the Fisher information metric. The sampler dynamics is given by
\begin{align}
      \rd \theta = - {\mG}^{-1}(\theta) \nabla U(\theta) \rd t + \Gamma(\theta) + \sqrt{2 {\mG}^{-1}(\theta)} \rd \mW .
\label{SGRLDcont}
\end{align}
Taking $\mD(\theta) = {\mG}^{-1}(\theta)$ and ${\mQ}(\theta) = \vzero$, the SGRLD method falls into our current framework with the correction term $\Gamma_i(\theta) = \sum\limits_j \dfrac{\partial {\mD}_{ij} (\theta)}{\partial \theta_j}$.

\paragraph{Stochastic Gradient Nos\'e-Hoover Thermostat (SGNHT)} Finally, the continuous dynamics underlying the SGNHT~\cite{SGNHT} algorithm in Sec.~\ref{sec:previous} are
\begin{align}
\left\{
\begin{array}{l}
      \rd \theta = r \rd t \\
      \rd r = - \nabla U(\theta) \rd t - \xi r \ \rd t + \sqrt{2 A} \ \rd {\mW} \\
      \rd \xi = \left(\dfrac1d r^T r - 1 \right) rd t.
        \end{array}
\right.
\end{align}
Again, we see we can take $\vz=(\theta,r,\xi)$,
$H(\theta, r, \xi) = U(\theta) + \dfrac12 r^T r + \dfrac{1}{2d} (\xi-A)^2$,
${\mD}(\theta, r,\xi) =
\begin{tiny}
\left(
\begin{array}{ccc}
         0 & 0 & 0\\
         0 & A\cdot \mI & 0\\
         0 & 0 & 0
\end{array}
\right)\end{tiny}$,
and
${\mQ}(\theta, r,\xi) =
\begin{tiny}
\left(
\begin{array}{ccc}
         0 & -\mI & 0\\
         \mI & 0 & r / d\\
         0 & -r^T / d & 0
\end{array}
\right)\end{tiny}$ to place these dynamics within our framework.

\section{Discussion of Choice of D and Q}
\label{supp:DQdiscussion}
A lot of choices of $\mD(\vz)$ and $\mQ(\vz)$ could potentially result in faster convergence of the samplers than those previously explored. For example, $\mD(\vz)$ determines how much noise is introduced. Hence, an adaptive diffusion matrix $\mD(\vz)$ can facilitate a faster escape from a local mode if $||\mD(\vz)||$ is larger in regions of low probability, and can increase accuracy near the global mode if $||\mD(\vz)||$ is smaller in regions of high probability. Motivated by the fact that a majority of the parameter space is covered by low probability mass regions where less accuracy is often needed, one might want to traverse these regions quickly.  As such, an adaptive curl matrix $\mQ(\vz)$ with $2$-norm growing with the level set of the distribution can facilitate a more efficient sampler. We explore an example of this in the gSGRHMC algorithm of the synthetic experiments (see Supp.~\ref{supp:synth}).

\section{Parameter Settings in Synthetic and Online Latent Dirichlet Allocation Experiments}
\label{supp:Parameters}
\subsection{Synthetic Experiments}
\label{supp:synth}
In the synthetic experiment using gSGRHMC, we specifically consider $\mG(\theta)^{-1} = D \sqrt{|\psU(\theta) + C|}$. The constant $C$ ensures that ${\psU(\theta) + C}$ is positive in most cases so that the fluctuation is indeed smaller when the probability density function is higher. Note that we define $\mG(\theta)$ in terms of $\psU(\theta)$ to avoid a costly full-data computation. We choose $D=1.5$ and $C=0.5$ in the experiments. The design of $\mG$ is motivated by the discussion in Supp.~\ref{supp:DQdiscussion}, taking $\mQ(\theta)$ to have 2-norm growing with the level sets of the potential function can lead to faster exploration of the posterior.

\begin{figure}[h]
\centering
\hspace*{-0.3in}
\includegraphics[height=3.9cm]{./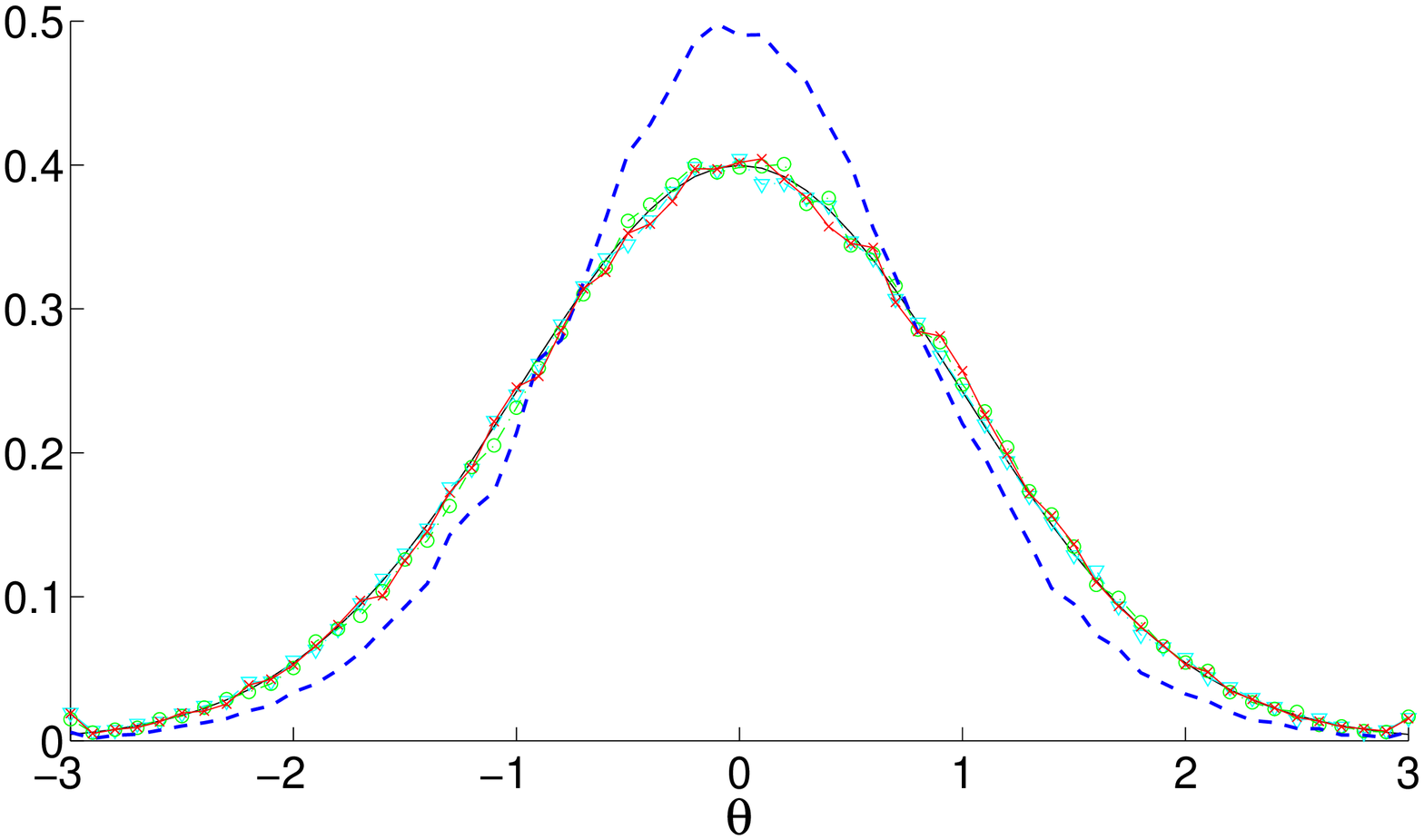}
\hspace*{-0.3in}
\includegraphics[height=3.9cm]{./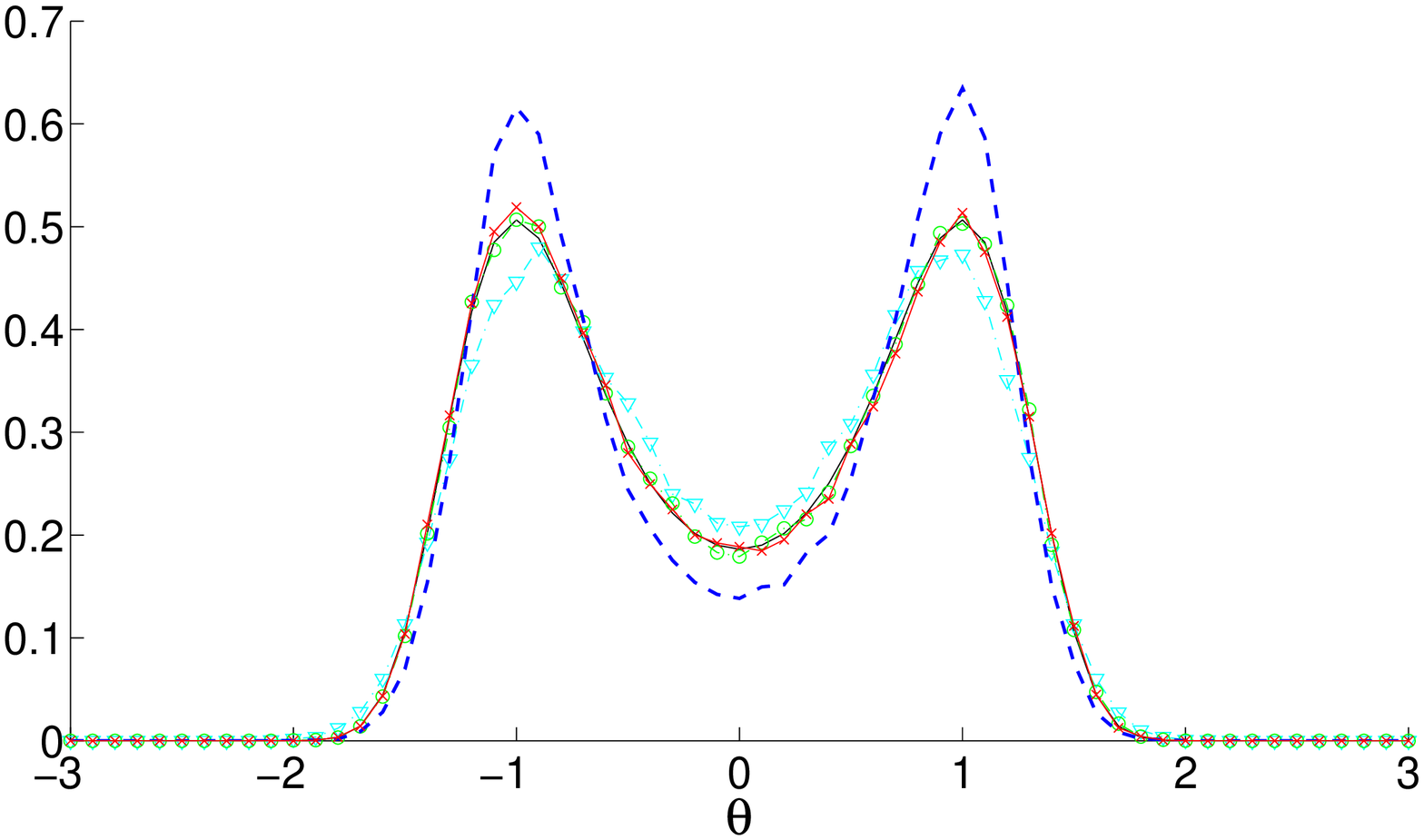}
\hspace*{-0.3in}
\caption{ For two simulated 1D distributions (\texttt{black}) defined by $U(\theta) = \theta^2/2$ (\emph{left}) and $U(\theta) = \theta^4-2\theta^2$ (\emph{right}), comparison of \textcolor[rgb]{0.0, 1.0, 1.0}{\texttt{SGLD}}, \textcolor[rgb]{0.0, 1.0, 0.0}{\texttt{SGHMC}}, the \textcolor[rgb]{0.0, 0.0, 1.0}{\texttt{na\"{i}ve SGRHMC}} of Eq.~\eqref{eq:SGRHMCnaive}, and the \textcolor[rgb]{1.0, 0.0, 0.0}{\texttt{gSGRHMC}} of Eq.~\eqref{ExSGRHMC} in the main paper.
}
\label{fig:Test}
\end{figure}

Comparison of SGLD, SGHMC, the na\"{i}ve implementation of SGRHMC (Eq.~\eqref{eq:SGRHMCnaive}), and the gSGRHMC methods is shown in Fig.~\ref{fig:Test}, indicating the incorrectness of the na\"{i}ve SGRHMC.

\subsection{Online Latent Dirichlet Allocation Experiment}
In the online latent Dirichlet allocation (LDA) experiment, we used minibatches of $50$ documents and $K=50$ topics.
Similar to~\cite{SGRLD}, the stochastic gradient of the log posterior of the parameter $\theta$ on a minibatch $\widetilde{\mathcal{S}}$ is calculated as
\begin{equation}
\dfrac{\partial \log p(\theta|\rvx,\alpha,\gamma)}{\partial \theta_{kw}}
\approx \dfrac{\alpha-1}{\theta_{kw}} -1 + \dfrac{|\mathcal{S}|}{|\widetilde{\mathcal{S}}|}\sum_{d\in\widetilde{\mathcal{S}}}
\mathbb{E}_{\vz^{(d)}|\rvx^{(d)},\theta,\gamma}\left[\dfrac{n_{dkw}}{\theta_{kw}} - \dfrac{n_{dk\cdot}}{\theta_{k\cdot}}\right],
\end{equation}
where $\alpha$ is the hyper-parameter for the Gamma prior of per-topic word distributions, and $\gamma$ for the per-document topic distributions.
Here, $n_{dkw}$ is the count of how many times word $w$ is assigned to topic $k$ in document $d$ (via $z_j^{(d)}=k$ for $x_j = w$).  The $\cdot$ notation indicates $n_{dk\cdot} = \sum_w n_{dkw}$. To calculate the expectation of the latent topic assignment counts $n_{dkw}$, Gibbs sampling is used on the topic assignments in each document separately, using the conditional distributions
\begin{equation}
p(z^{(d)}_j=k|\rvx^{(d)},\theta,\gamma)
= \dfrac{\left(\gamma+n^{\backslash j}_{dk\cdot}\right)\theta_{k x^{(d)}_j}}
{\sum_k\left(\gamma+n^{\backslash j}_{dk\cdot}\right)\theta_{k x^{(d)}_j}},
\end{equation}
where $\backslash j$ represents a count excluding the topic assignment variable $z_j^{(d)}$ being updated.  See~\cite{SGRLD} for further details.

We follow the experimental settings in~\cite{SGRLD} for Riemmanian samplers (SGRLD and SGRHMC), taking the hyper-parameters of Dirichlet priors to be $\gamma=0.01$ and $\alpha=0.0001$.
Since the non-Riemmanian samplers (SGLD and SGHMC) do not handle distributions with mass concentrated over small regions as well as the Riemmanian samplers, we found $\gamma=0.1$ and $\alpha=0.01$ to be optimal hyper-parameters for them and use these instead for SGLD and SGHMC.  In doing so, we are modifying the posterior being sampled, but wished to provide as good of performance as possible for these baseline methods for a fair comparison.
For the SGRLD method, we keep the stepsize schedule of $\epsilon_t = \left(a\cdot\bigg(1+\dfrac{t}{b}\bigg)\right)^{-c}$ and corresponding optimal parameters $a,b,c$ used in the experiment of~\cite{SGRLD}.
For the other methods, we use a constant stepsize because it was easier to tune.  (A constant stepsize for SGRLD performed worse than the schedule described above, so again we are trying to be as fair to baseline methods as possible when using non-constant stepsize for SGRLD.)
A grid search is performed to find $\epsilon_t=0.02$ for the SGRHMC method; $\epsilon_t=0.01$, $\mD=I$ (corresponding to Eq.~\eqref{eq:SGLD} in the main paper) for the SGLD method; and $\epsilon_t=0.1$, $\mC=\mM=I$ (corresponding to Eq.~\eqref{eq:SGHMC} in the main paper) for the SGHMC method.

For a randomly selected subset of topics, in Table~\ref{table:topic_list} we show the top seven most heavily weighted words in the topic learned with the SGRHMC sampler.

\begin{table}[h]
\centering
\parbox{1\textwidth}{
\begin{tabular}{ c | c c c c c c } %\hline
``ENGINES" & speed& product& introduced& designs& fuel& quality\\
``ROYAL"& britain& queen& sir& earl& died& house\\
``ARMY"& commander& forces& war& general& military& colonel\\
``STUDY"& analysis& space& program& user& research& developed\\
``PARTY"& act& office& judge& justice& legal& vote\\
``DESIGN"& size& glass& device& memory& engine& cost\\
``PUBLIC"& report& health& community& industry& conference& congress\\
``CHURCH"& prayers& communion& religious& faith& historical& doctrine\\
``COMPANY"& design& production& produced& management& market& primary\\
``PRESIDENT"& national& minister& trial& states& policy& council\\
``SCORE"& goals& team& club& league& clubs& years
\end{tabular}\\
}
\caption{The top seven most heavily weighted words (columns) associated with each of a randomly selected set of 11 topics (rows) learned with the SGRHMC sampler from 10,000 documents (about 0.3\% of the articles in Wikipedia).  The capitalized words in the first column represent the most heavily weighted word in each topic, and are used as the topic labels.}
\label{table:topic_list}
\end{table}

\end{document}